\theoremstyle{plain}
\newtheorem{theorem}{Theorem}[section]
\newtheorem*{theorem*}{Theorem}
\newtheorem{proposition}[theorem]{Proposition}
\newtheorem{lemma}[theorem]{Lemma}
\theoremstyle{definition}
\newtheorem{definition}[theorem]{Definition}
\newtheorem{assumption}[theorem]{Assumption}
\newtheorem{example}[theorem]{Example}
\theoremstyle{remark}
\newtheorem{remark}[theorem]{Remark}
\numberwithin{equation}{section}
\DeclareMathOperator{\sgn}{sgn}
\newcommand{\ev}{\mathbb{E}}
\newcommand{\pr}{\mathbb{P}}
\newcommand{\qr}{\mathbb{Q}}
\newcommand{\R}{\mathbb{R}}
\renewcommand{\P}{\mathcal{P}}
\newcommand{\F}{\mathcal{F}}
\renewcommand{\L}{\mathcal{L}}
\newcommand{\Leb}{\mathrm{Leb}}
\renewcommand{\d}{\mathrm{d}}
\newcommand{\define}{\mathpunct{:}}
\newcommand{\bb}[1]{\mathbb{#1}}
\renewcommand{\cal}[1]{\mathcal{#1}}
\begin{document}
\noindent
\begin{center}
    \Large
    \textbf{Control of Conditional Processes \\
    and Fleming--Viot Dynamics}

    \vspace{1em}

    \normalsize
    Philipp Jettkant\footnote[1]{Department of Mathematics, Imperial College London, United Kingdom, \href{mailto:p.jettkant@imperial.ac.uk}{p.jettkant@imperial.ac.uk}.}

\end{center}

\vspace{1em}

\begin{abstract}
We discuss equivalent formulations of the control of conditional processes introduced by Lions. In this problem, a controlled diffusion process is killed once it hits the boundary of a given domain and the controller's reward is computed based on the conditional distribution given the process's survival. So far there is no clarity regarding the relationship between the open- and closed-loop formulation of this nonstandard control problem. We provide a short proof of their equivalence using measurable selection and mimicking arguments. In addition, we link the closed-loop formulation to Fleming--Viot dynamics of McKean--Vlasov type, where upon being killed the diffusion process is reinserted into the domain according to the current law of the process itself. This connection offers a new interpretation of the control problem and opens it up to applications that feature costs caused by reinsertion.
\end{abstract}

\section{Introduction} \label{sec:introduction}

In this article, we consider the control of conditional processes, originally introduced by Lions in a series of lectures at the Coll\`ege de France \cite{lions_cond_proc_2016}. Fix a bounded and open subset $D$ of $\R^d$ and let $(\Omega, \F, \pr)$ be a probability space carrying a filtration $\bb{F} = (\F_t)_{t \geq 0}$ with $\F_t \subset \F$ for $t \geq 0$, a $D$-valued $\F_0$-measurable random variable $\xi$, and a $d$-dimensional $\bb{F}$-Brownian motion $W = (W_t)_{t \geq 0}$. The $\R^d$-valued state process $X = (X_t)_{t \geq 0}$ follows the dynamics
\begin{equation} \label{eq:conditional_mv}
    \d X_t = b(t, X_t, \mu_t, \alpha_t) \, \d t + \sigma \, \d W_t
\end{equation}
with initial condition $X_0 = \xi$. Here $\sigma \in \R^{d \times d}$ is invertible and $\alpha = (\alpha_t)_{t \geq 0}$ is an $\bb{F}$-progressively measurable control process that takes values in some closed subset $A$ of $\R^{d_A}$. The probability measure $\mu_t = \L(X_t \vert \tau > t)$ is the conditional distribution of $X_t$ given that the diffusion has not exited the domain $D$ at time $t$, i.e.\@ $\tau = \inf\{s > 0 \define X_s \notin D\}$. We set $\mu = (\mu_t)_{t \geq 0}$, so that $\mu$ is an element of $C([0, \infty); \P(D))$, where the set $\P(D)$ of probability measures on $D$ is endowed with the topology of weak convergence. If we interpret exiting the domain as killing the diffusion, then $\mu_t$ is the conditional distribution of the surviving process. Since the coefficients appearing in SDE \eqref{eq:conditional_mv} depend on the conditional distribution of the solution $X$, we refer to SDE \eqref{eq:conditional_mv} as a conditional McKean--Vlasov SDE. This should be distinguished from McKean--Vlasov SDEs with a common noise, where one conditions the distribution of the solution on the common noise. 

The controller's objective is to maximise the reward
\begin{equation} \label{eq:cost_functional}
    J(\alpha, \mu) = \int_0^T\ev\bigl[f(t, X_t, \mu_t, \alpha_t) \big\vert \tau > t\bigr]  \, \d t + g(\mu_T)
\end{equation}
over a set $\cal{A}$ of \textit{admissible controls} $(\alpha, \mu)$, where $T > 0$ is some fixed time horizon. The set $\cal{A}$ consists of tuples $(\alpha, \mu)$ such that $\alpha$ is an $\bb{F}$-progressively measurable $A$-valued process and $\mu$ is an element of $C([0, \infty); \P(D))$, for which there exists a strong solution $X$ to the conditional McKean--Vlasov SDE \eqref{eq:conditional_mv} with $\mu_t = \L(X_t \vert \tau > t)$ for $t \geq 0$. We will refer to the elements of $\cal{A}$ also as \textit{open-loop controls}. We define the \textit{value} $V$ of the control problem by $V = \sup_{(\alpha, \mu) \in \cal{A}} J(\alpha, \mu)$. Note that the reason for making $\mu$ part of the control is that it is not clear whether the conditional McKean--Vlasov SDE \eqref{eq:conditional_mv} has a unique solution for any $\bb{F}$-progressively measurable $A$-valued process $\alpha$. We refer to an admissible control $(\alpha, \mu) \in \cal{A}$ as a \textit{closed-loop control} if $\alpha_t = a(t, X_t)$ for a measurable function $a \define [0, \infty) \times \R^d \to A$, where $X$ denotes the solution to the conditional McKean--Vlasov SDE \eqref{eq:conditional_mv} corresponding to $(\alpha, \mu)$. The map $a$ is called the \textit{feedback function} (associated with $(\alpha, \mu)$). The \textit{value} of the closed-loop formulation, i.e.\@ the supremum of $J$ over all closed-loop controls, is denoted by $V_{\textup{closed}}$. The closed-loop formulation of the control of conditional processes, its (deterministic) reformulation through the Fokker--Planck equation satisfied by $\mu_t$, and the asymptotic behaviour as $T \to \infty$ were studied by Achdou, Lauri\`ere \& Lions \cite{achdou_cond_oc_2021}. Nutz \& Zhang \cite{nutz_cond_stopping_2020} considered an optimal stopping problem in the conditional framework. Note that since the set of closed-loop controls is a subset of $\cal{A}$, it holds that $V_{\textup{closed}} \leq V$. In the first part of this paper, we shall be interested in establishing the reverse inequality. 

\subsection{Open- Versus Closed-Loop Formulation}

As it stands, there is a lack of clarity regarding the equivalence between the open- and closed-loop formulation of the control problem, that is, whether $V = V_{\textup{closed}}$. In his lectures at the Coll\`ege de France, Lions demonstrated that the control of conditional processes differs from standard control problems in that an optimal closed-loop control does not only depend on the current state $X_t$ but also the conditional distribution $\mu_t = \L(X_t \vert \tau > t)$\footnote[1]{Cf.\@ Lecture \href{https://www.college-de-france.fr/fr/agenda/cours/equations-de-hjb-et-extensions-de-la-theorie-classique-du-controle-stochastique/equations-de-hjb-et-extensions-de-la-theorie-classique-du-controle-stochastique-9}{9} of the course by Lions \cite{lions_cond_proc_2016}.}. This resembles the situation for the control of McKean--Vlasov SDEs, where optimal closed-loop controls vary with the law of the solution to the McKean--Vlasov SDE. In both cases, this additional dependence can be subsumed in the time variable of the feedback function $a$ associated with an optimal closed-loop control, suggesting that the supremum of $J$ over all open- and closed-loop controls may coincide, i.e.\@ $V = V_{\textup{closed}}$.
As is now well-known, the latter is not only the case for standard control problems, but also typically holds for  McKean--Vlasov control problems \cite{lacker_limit_theory_mve_2017, lacker_mimicking_2023}. Thus, it is natural to suspect the same for the control of conditional processes.

The first objective of the present article is to confirm this hypothesis. To this end, we adapt the strategy by Lacker \cite{lacker_limit_theory_mve_2017}, originally used to establish the equivalence between open- and closed-loop controls for standard McKean--Vlasov control problems, to conditional processes. This idea was suggested in a recent preprint \cite[Appendix A.4]{hambly_bspde_2024} by the author and B.\@ Hambly, but requires further theoretical justification, which we provide here while simplifying the arguments. A similar approach, more closely resembling the original proposal in \cite[Appendix A.4]{hambly_bspde_2024}, is pursued in a concurrent work by Carmona \& Lacker \cite{carmona_mimicking_conditional_2024}. The results in this paper were developed independently of \cite{carmona_mimicking_conditional_2024} and we provide a brief comparison in Remark \ref{rem:comp_car} in Section \ref{sec:equivalence} below. Let us outline the strategy from \cite{lacker_limit_theory_mve_2017} and explain our adaptation: starting from any open-loop control $(\alpha, \mu) \in \cal{A}$ with corresponding solution $X$ to SDE \eqref{eq:conditional_mv}, one employs a measurable selection theorem by Haussmann \& Lepeltier \cite[Theorem A.9]{haussmann_oc_1990} together with the mimicking theorem by Brunick \& Shreve \cite{brunick_mimicking_2013} to construct a feedback function $a$, derived from the control $\alpha$, as well as a solution $\tilde{X}$ to the SDE 
\begin{equation} \label{eq:mimicking_intro}
    \d \tilde{X}_t = b\bigl(t, \tilde{X}_t, \mu_t, a(t, \tilde{X}_t)\bigr) \, \d t + \sigma \, \d W_t,
\end{equation}
such that $\L(\tilde{X}_t) = \L(X_t)$. If SDE \eqref{eq:conditional_mv} were a classical McKean--Vlasov SDE, meaning that $\mu_t = \L(X_t) = \L(\tilde{X}_t)$, then $\tilde{X}$ would be a solution to this McKean--Vlasov SDE with control process $\tilde{\alpha}_t = a(t, \tilde{X}_t)$ and, under suitable convexity assumptions, the selection of $a$ through \cite[Theorem A.9]{haussmann_oc_1990} would guarantee that $J(\tilde{\alpha}, \mu) \geq J(\alpha, \mu)$. However, in order for $\tilde{X}$ to solve the \textit{conditional} McKean--Vlasov SDE \eqref{eq:conditional_mv}, we must have $\L(\tilde{X}_t \vert \tilde{\tau} > t) = \L(X_t \vert \tau > t) = \mu_t$, with $\tilde{\tau} = \inf\{t > 0 \define \tilde{X}_t \notin D\}$, instead of $\L(\tilde{X}_t) = \L(X_t) = \mu_t$. To achieve this, the key insight is that, under mild regularity conditions on the domain $D$, we can represent the conditional McKean--Vlasov SDE \eqref{eq:conditional_mv} equivalently through the system
\begin{equation} \label{eq:joint_mimicking}
    \d X_t = b(t, X_t, \mu_t, \alpha_t) \, \d t + \sigma \, \d W_t, \quad \d \Lambda_t = \mathbf{1}_{X_t \notin D} \, \d t
\end{equation}
with initial condition $X_0 = \xi$, $\Lambda_0 = 0$, and $\mu_t = \L(X_t \vert \Lambda_t = 0)$, using that a.s.\@ $\inf\{t > 0 \define X_t \notin D\} = \inf\{t > 0 \define \Lambda_t > 0\}$. We may then apply the mimicking theorem to the joint process $(X, \Lambda)$ to conclude.

It should be stressed that a closely related result was obtained by Campi, Ghio \& Livieri \cite{campi_mfg_hitting_2021}. They study mean-field games where players interact through their subprobability distribution $\nu_t = \pr(X_t \in \cdot,\, \tau > t)$ rather than the conditional distribution $\mu_t = \L(X_t \vert \tau > t)$. Likewise, the computation of the reward functional is based on the subprobability distribution, replacing $\ev[f(t, X_t, \mu_t, \alpha_t) \vert \tau > t]$ and $g(\mu_T)$ in \eqref{eq:cost_functional} with $\ev[\mathbf{1}_{\tau > t} f(t, X_t, \nu_t, \alpha_t)]$ and $g(\nu_T)$. Using ideas that resemble the approach we take here, they construct a closed-loop Nash equilibrium in \cite[Proposition 3.6]{campi_mfg_hitting_2021}. Their proof omits certain details associated with the removal of players at the exit time $\tau$. Instead of relying on the pair $(X, \Lambda)$ introduced in \eqref{eq:joint_mimicking}, they mimic the process $Y_t = (t \land \tau, X_{t \land \tau})$. However, it is not immediately evident whether the mimicking process $\tilde{Y} = (\tilde{Y}^1, \tilde{Y}^2)$ takes the intended form $(t \land \tilde{\tau}, \tilde{X}_{t \land \tilde{\tau}})$, where $\tilde{X}$ solves SDE \eqref{eq:mimicking_intro} and $\tilde{\tau} = \inf\{t > 0 \define \tilde{X}_t \notin D\}$. This issue can be resolved by applying the extended mimicking theorem by Brunick \& Shreve \cite[Theorem 3.6]{brunick_mimicking_2013} with an appropriate choice of \textit{updating function} (a concept introduced in \cite[Definition 3.1]{brunick_mimicking_2013}), but was not discussed in \cite{campi_mfg_hitting_2021}. Private communication with the article's authors revealed that they had considered the use of a suitable updating function, but ultimately this was not included in the article. Our approach circumvents the usage of updating functions altogether.

Let us also mention that in an earlier paper, Campi \& Fischer \cite{campi_mfg_absorption_2018} considered mean-field games where players interact through their conditional distribution $\L(X_t \vert \tau > t)$, while the computation of the reward $J$ is based on the subprobability distribution $\pr(X_t \in \cdot,\, \tau > t)$. They also show the existence of a closed-loop Nash equilibrium, but following an entirely different line of argument. They characterise Nash equilibria through a forward-backward SDE and, subsequently, express the solution of the backward SDE, and thus the optimal control of the representative player, as a function of the state process $X_t$ (and $t$). However, these arguments are specific to the mean-field game setting and cannot be transferred to the control of conditional processes. 

In a recent work, aimed at making progress towards confirming the equivalence of the open- and closed-loop formulation for the control of conditional processes, Carmona, Lauri\`ere \& Lions \cite{carmona_nssc_2023} studied a relaxation with ``soft'' as opposed to ``hard'' killing. Rather than killing the diffusion once the state $X_t$ hits the boundary of $D$, the process $X$ can exit the domain and is killed at a positive rate $\lambda(X_t) > 0$ whenever it is located outside of $\bar{D}$. Specifically, the diffusion is killed once $\Lambda_t = \int_0^t \lambda(X_s) \, \d s$ exceeds a standard exponential random variable. Accordingly, $\mu_t$ is replaced by $\ev[e^{-\Lambda_t} \delta_{X_t}]/\ev[e^{-\Lambda_t}]$, where the fraction $e^{-\Lambda_t}$ represents the probability that the diffusion is still alive. An analogous replacement is made in the conditioning appearing in the reward functional $J$ in \eqref{eq:cost_functional}. Note that for the specific choice $\lambda(x) = \infty \mathbf{1}_{\{x \notin \bar{D}\}}$, not covered in \cite{carmona_nssc_2023}, one recovers Lions' original problem, since then $e^{-\Lambda_t} = \mathbf{1}_{\{X_s \in \bar{D},\, 0 \leq s \leq t\}} = \mathbf{1}_{\{\tau > t\}}$ outside the nullset $\{\tau = t\}$. In the case of soft killing, the main challenge in establishing the equivalence between the open- and closed-loop formulation lies in proving that one does not need to track $\Lambda_t$ in order to optimally control the system. In other words, it suffices to consider feedback functions $a$ that depend on $t$ and the state $X_t$ but not the cumulative intensity $\Lambda_t$. This fact is closely related to the memoryless property of the exponential distribution. Carmona, Lauri\`ere \& Lions achieve this by studying two nonlocal PDEs of Hamilton--Jacobi--Bellman type that characterise optimal closed-loop controls with and without dependence on $\Lambda_t$. They show that both PDEs have the same unique solution, thereby guaranteeing that both types of closed-loop controls yield the same value. A similar equivalence result was obtained by Hambly \& Jettkant \cite{hambly_sfpe_2024} in the context of McKean--Vlasov control with soft killing and common noise. Its proof also builds on the ideas by Lacker \cite{lacker_limit_theory_mve_2017}, drawing on a generalisation of Gy\"ongy's mimicking theorem \cite{gyongy_mimicking_1986} to McKean--Vlasov SDEs with common noise by Lacker, Shkolnikov \& Zhang \cite{lacker_mimicking_2023}, thus avoiding the study of nonlocal PDEs. A key contribution of the present paper is to address the equivalence hypothesis directly for Lions' original model with hard killing, without introducing a relaxation of the control problem.

\subsection{Fleming--Viot Dynamics of McKean--Vlasov Type}

The second contribution of this article is to link the conditional McKean--Vlasov SDE \eqref{eq:conditional_mv} to Fleming--Viot dynamics of McKean--Vlasov type. This connection is inspired by the relationship between \eqref{eq:conditional_mv} and a Fleming--Viot type particle system established by Burdzy et al.\@ \cite{burdzy_flem_viot_1996, burdzy_flem_viot_2000}. The latter system consists of $N \geq 1$ Brownian particles $X^i_t$ in a domain $D$, which upon hitting the boundary $\partial D$ are reinserted at a position that is chosen uniformly at random amongst the current locations of the remaining particles. Burdzy, Ho\l{}yst \& March \cite{burdzy_flem_viot_2000} show that as $N \to \infty$, for each $t \geq 0$, the empirical measure $\frac{1}{N} \sum_{i = 1}^N \delta_{X^i_t}$ of the particles converges to the marginal distribution of a Brownian motion killed upon hitting $\partial D$, conditioned on survival. This dynamic is a special case of the conditional McKean--Vlasov SDE \eqref{eq:conditional_mv}, where $b$ vanishes and $\sigma$ is the identity matrix. A uniform-in-time version of this result was later proven by Grigorescu and Kang \cite{grigorescu_flem_viot_2004}. Subsequently, Tough and Nolen \cite{tough_fleming_viot_2022} generalised the theory to systems, where each particle's dynamics can feature a drift that may depend on the particle's location as well as the empirical measure, similar to \eqref{eq:conditional_mv}. In each of these cases, only the single marginal empirical distributions $\frac{1}{N} \sum_{i = 1}^N \delta_{X^i_t}$ of the particle system converge to the time marginals $\mu_t = \L(X_t \vert \tau > t)$ of the conditional process, while the trajectories of the two systems are quite different.
% $\frac{1}{N} \sum_{i = 1}^N \delta_{X^i_{\cdot \land t}}$ and $\L(X_{\cdot \land t} \vert \tau > t)$ of the two systems differ.

In this work, we introduce a mean-field analogue of the Fleming--Viot type particle system, that also captures the particle system's trajectorial behaviour. We refer to these mean-field dynamics as \textit{Fleming--Viot dynamics of McKean--Vlasov type}. The state $Y$ of the representative particle of this system follows the McKean--Vlasov SDE
\begin{equation} \label{eq:fleming_viot_intro}
    \d Y_t = b\bigl(t, Y_t, \L(Y_t), a(t, Y_t)\bigr) \, \d t + \sigma \, \d W_t
\end{equation}
for a feedback function $a \define [0, \infty) \times \R^d \to A$. In addition, if the particle hits the boundary $\partial D$ at some time $t \geq 0$, it is reinserted according to the prevailing law $\L(Y_t)$. This process is repeated whenever the particle exits $D$. The expected number of reinsertions up to time $t$ is denoted by $F_t$. We show that the above dynamics are well-posed, that the marginal law $\L(Y_t)$ coincides with the conditional distribution $\mu_t = \L(X_t \vert \tau > t)$ of the solution to the conditional McKean--Vlasov SDE \eqref{eq:conditional_mv} with closed-loop control $\alpha_t = a(t, X_t)$, and that $F_t = -\log \pr(\tau > t)$. Consequently, controlling the conditional McKean--Vlasov SDE \eqref{eq:conditional_mv} over feedback functions $a \define [0, \infty) \times \R^d \to A$ is equivalent to controlling the Fleming--Viot dynamics of McKean--Vlasov type \eqref{eq:fleming_viot_intro} over such $a$. 

The dynamics \eqref{eq:fleming_viot_intro} open up the control problem for conditional processes to a larger class of applications, where reinsertion is penalised by adding the cost $-c F_T$, for $c > 0$, to the reward functional. It is worth noting that this term is already incorporated in the work by Achdou, Lauri\`ere \& Lions \cite[Equation (6)]{achdou_cond_oc_2021} to facilitate the passage to the limit as $T \to \infty$, though it is not interpreted as a reinsertion cost. A concrete example of a model with reinsertion is the following: $Y_t$ represents the workload of a machine of a large manufacturing company. The workload is managed by the company's employees through the control $a(t, Y_t)$, based on the current workload, but is also subject to random shocks, modelled by a Brownian motion, so that $\d Y_t = a(t, Y_t) \, \d t + \sigma \, \d W_t$. Increasing the machine's workload raises the company's revenues, but also makes the machine more susceptible to defects. If the workload becomes too high, the machine breaks down and has to be replaced at the cost $c$. The company's objective is to find the right balance between revenue generation and minimisation of operating costs. We leave the analysis of the control problem for McKean--Vlasov SDE \eqref{eq:fleming_viot_intro} in the presence of reinsertion costs and its applications for future work.

\section{Equivalence between the Open- and Closed-Loop Formulation} \label{sec:equivalence}

In this section, we establish the equivalence between the open- and closed-loop formulation of the control of conditional processes, see Theorem \ref{thm:closed_from_weak}. We begin by stating the assumptions under which we will be working in the remainder of this section. Since we restrict our attention to the compact interval $[0, T]$ throughout this article, we assume for simplicity that the coefficient $b$ vanishes for $t \in (T, \infty)$. Consequently, if we apply a change of measure \`a la Girsanov to remove or adjust the drift, it suffices to evaluate the stochastic exponential, which provides the density of the new measure, at time $T$.

\begin{assumption} \label{ass:control_problem}
We assume that $b \define [0, \infty) \times \R^d \times \P(D) \times A \to \R^d$, $f \define [0, \infty) \times \R^d \times \P(D) \times A \to \R$, and $g \define \P(D) \to \R$ are measurable and bounded functions, $\sigma \in \R^{d \times d}$, and that there exists a constant $C > 0$ such that
\begin{enumerate}
    \item \label{it:coefficient} for all $(t, x, a) \in [0, T] \times \R^d \times A$ and $m$, $m' \in \P(D)$, we have
    \begin{equation*}
        \bigl\lvert b(t, x, m, a) - b(t, x, m', a)\rvert \leq C\lVert m - m'\rVert_{\textup{TV}};
    \end{equation*}
    \item \label{it:nondeg} $\sigma$ is invertible.
\end{enumerate}
Here $\lVert \cdot \rVert_{\textup{TV}}$ denotes the total variation norm on the space of finite signed measures on $D$.
\end{assumption}

The above condition on the drift $b$ differs from the typically assumed Lipschitz continuity in space and the measure argument (the latter being with respect to some Wasserstein metric on the space of probability measures). This reflects the fact that solutions to the conditional McKean--Vlasov SDE \eqref{eq:conditional_mv} cannot be constructed through a standard contraction argument on a suitable Banach space of stochastic processes with continuous trajectories. Instead, we rely on total variation estimates for SDEs presented by Campi \& Fischer \cite[Proposition C.1]{campi_mfg_absorption_2018}. These allow us to obtain the flow $\mu = (\mu_t)_{0 \leq t \leq T}$ of conditional probabilities in \eqref{eq:conditional_mv} as the fixed point of a contraction on the space $C([0, T]; \P(\R^d))$ in the total variation distance.

The following (semi)continuity and convexity assumption is required to apply a measurable selection theorem from Haussmann \& Lepeltier \cite[Theorem A.9]{haussmann_oc_1990}. It allows us to construct a closed-loop from an open-loop control.

\begin{assumption} \label{ass:convex}
We assume that for any $(t, x, m) \in [0, T] \times \R^d \times D$, the functions $A \ni a \mapsto b(t, x, m, a)$ and $A \ni a \mapsto f(t, x, m, a)$ are continuous and upper semicontinuous, respectively, and the set
\begin{equation*}
    \Bigl\{\bigl(b(t, x, m, a), z\bigr) \define z \in \R,\, a \in A,\, z \leq f(t, x, m, a)\Bigr\} \subset \R^d \times \R
\end{equation*}
is closed and convex.
\end{assumption}

Our goal is to show that $V_{\textup{closed}} = V$. We achieve this by obtaining for any given admissible control $(\alpha, \mu)$ a closed-loop control that achieves the same or a greater reward. In the remainder of this section, we will denote the solution to the conditional McKean--Vlasov SDE \eqref{eq:conditional_mv} associated to an admissible control $(\alpha, \mu)$ by $X^{\alpha, \mu}$. We write $\tau_{\alpha, \mu}$ for the corresponding hitting time of $D^c$. As mentioned in Section \ref{sec:introduction}, the uniqueness of the conditional McKean--Vlasov SDE \eqref{eq:conditional_mv} for an arbitrary $\bb{F}$-progressively measurable $A$-valued process $\alpha$ is not obvious, due to the comparably irregular behaviour of the hitting time $\tau_{\alpha, \mu}$. This is why our definition of an admissible control includes the flow $\mu$ of the conditional law of the solution. However, as we show next, starting from a feedback function $a \define [0, \infty) \times \R^d \to A$, we can construct a unique strong solution to the conditional McKean--Vlasov SDE
\begin{equation} \label{eq:conditional_mv_closed}
    \d X_t = b\bigl(t, X_t, \mu_t, a(t, X_t)\bigr) \, \d t + \sigma \, \d W_t
\end{equation}
with $\mu_t = \L(X_t \vert \tau > t)$ and $\tau = \inf\{t > 0 \define X_t \notin D\}$. Then, the pair $(\alpha, \mu)$, given by $\alpha_t = a(t, X_t)$ and $\mu = (\mu_t)_{0 \leq t \leq T}$, yields a closed-loop control. In particular, the set of admissible controls is not empty. Since the feedback function $a$ completely determines the associated closed-loop control $(\alpha, \mu)$, we will sometimes refer to $a$ itself as a closed-loop control. Moreover, we define the closed-loop reward functional $J_{\textup{CP}}$ by $J_{\textup{CP}}(a) = J(\alpha, \mu)$.

Before we state the existence and uniqueness result for the conditional McKean--Vlasov SDE \eqref{eq:conditional_mv_closed}, we need to make a couple of definitions. We equip the space $C([0, \infty); \R^d)$ with the metric
\begin{equation*}
    (x, y) \mapsto \sum_{k = 1}^{\infty} 2^{-k} \bigl(\lVert x_{\cdot \land k} - y_{\cdot \land k} \rVert_{\infty} \land 1 \bigr).
\end{equation*}
The total variation norm on the space of finite signed measures on $C([0, \infty); \R^d)$ with respect to this metric is denoted by $\lVert \cdot \rVert_{\textup{TV}}$. Next, for a measure $\nu \in \P(C([0, \infty); \R^d))$ and $t \geq 0$, we denote by $\nu_t \in \P(C([0, \infty); \R^d))$ the pushforward of $\nu$ under the map $C([0, \infty); \R^d) \to C([0, \infty); \R^d)$, $x \mapsto x_{\cdot \land t}$. Finally, we let $C([0, T]; \P(\R^d))$ be the space of flows of measures on $\P(\R^d)$ that are continuous with respect to the bounded Lipschitz distance on $\P(\R^d)$. Note that while continuity of the flows is measured with respect to the bounded Lipschitz distance, in the proof of Proposition \ref{prop:exist_unique} below, the space $C([0, T]; \P(\R^d))$ itself will be endowed with a metric involving the total variation norm on $\P(\R^d)$.

\begin{proposition} \label{prop:exist_unique}
Let Assumption \ref{ass:control_problem} be satisfied. Then for any measurable function $a \define [0, \infty) \times \R^d \to A$, the conditional McKean--Vlasov SDE \eqref{eq:conditional_mv_closed} has a unique strong solution. In particular, the set $\cal{A}$ of admissible controls is nonempty.
\end{proposition}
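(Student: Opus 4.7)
The plan is to reformulate SDE \eqref{eq:conditional_mv_closed} as a fixed-point problem for the conditional flow $\mu$ and solve it by a contraction argument driven by Assumption \ref{ass:control_problem}. For each candidate flow $\mu \in C([0, T]; \P(D))$, the map $(t, x) \mapsto b(t, x, \mu_t, a(t, x))$ is bounded and Borel measurable while $\sigma$ is constant and invertible, so Veretennikov's theorem on SDEs with bounded measurable drift yields a unique $\bb{F}$-adapted strong solution $X^\mu$ to
\begin{equation*}
    \d X^\mu_t = b\bigl(t, X^\mu_t, \mu_t, a(t, X^\mu_t)\bigr) \, \d t + \sigma \, \d W_t, \qquad X^\mu_0 = \xi.
\end{equation*}
Setting $\tau^\mu = \inf\{t > 0 \define X^\mu_t \notin D\}$, I would define the operator $\Phi(\mu)_t(B) = \pr(X^\mu_t \in B \mid \tau^\mu > t)$ for $t \in [0, T]$ and Borel $B \subseteq \R^d$. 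Solving \eqref{eq:conditional_mv_closed} is then equivalent to finding a fixed point of $\Phi$.

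First I would verify that $\Phi$ maps into $C([0, T]; \P(D))$. The survival probability $\pr(\tau^\mu > t)$ is bounded below uniformly in $\mu$, which follows from the boundedness of $b$ via a drift-removing change of measure analogous to the argument underlying the proof of Lemma \ref{lem:bounds}. Additionally, $X^\mu_t$ admits a density with respect to Lebesgue measure by non-degeneracy of $\sigma$ and boundedness of $b$, so the assumption $\Leb(\partial D) = 0$ ensures $\Phi(\mu)_t(\partial D) = 0$, whence $\Phi(\mu)_t \in \P(D)$. Continuity in $t$ with respect to the bounded Lipschitz metric is then standard.

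The central step is a Lipschitz estimate for $\Phi$, which I would obtain through a Girsanov argument. On an auxiliary space carrying a Brownian motion $\tilde{W}$ under $\tilde{\pr}$ with $X_t = \xi + \sigma \tilde{W}_t$ and $\tau = \inf\{t > 0 \define X_t \notin D\}$, introduce the true martingale
\begin{equation*}
    Z^\mu_t = \exp\left(\int_0^t \sigma^{-1} b\bigl(s, X_s, \mu_s, a(s, X_s)\bigr) \cdot \d\tilde{W}_s - \frac{1}{2}\int_0^t \bigl\lvert \sigma^{-1} b\bigl(s, X_s, \mu_s, a(s, X_s)\bigr)\bigr\rvert^2 \, \d s\right),
\end{equation*}
whose martingale property follows from the boundedness of $b$ and Novikov's criterion. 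Then $\pr(X^\mu_t \in B, \tau^\mu > t) = \ev^{\tilde{\pr}}[Z^\mu_t \bf{1}_{\{X_t \in B, \tau > t\}}]$. Representing $Z^\mu - Z^{\mu'}$ as an Itô integral, applying the Burkholder--Davis--Gundy inequality, and exploiting the Lipschitz dependence of $b$ on $m$ in total variation (Assumption \ref{ass:control_problem} \ref{it:coefficient}) together with the uniform lower bound on the survival probability yields
\begin{equation*}
    \lVert \Phi(\mu)_t - \Phi(\mu')_t\rVert_{\textup{TV}}^2 \leq C \int_0^t \lVert \mu_s - \mu'_s\rVert_{\textup{TV}}^2 \, \d s.
\end{equation*}
Iterating this Gronwall-type inequality shows that $\Phi^n$ is a strict contraction for large $n$ in the supremum total variation metric, so Banach's fixed point theorem produces a unique $\mu^*$ with $\Phi(\mu^*) = \mu^*$. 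The associated process $X^{\mu^*}$ is the desired strong solution, establishing both existence and the nonemptyness of $\cal{A}$. Uniqueness follows because any strong solution induces a fixed point of $\Phi$, forcing its conditional flow to be $\mu^*$, whereupon Veretennikov's pathwise uniqueness pins down the process itself.

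The hardest step is the Lipschitz estimate for $\Phi$. It hinges on three ingredients supplied by the hypotheses: the total variation Lipschitz continuity of $b$ in $m$ controls $Z^\mu - Z^{\mu'}$; the boundedness of $b$ yields uniform moment estimates on $Z^\mu$ and a uniform positive lower bound on $\pr(\tau^\mu > t)$; and the non-degeneracy of $\sigma$ combined with $\Leb(\partial D) = 0$ ensures $\Phi$ preserves $\P(D)$ rather than merely $\P(\bar{D})$.
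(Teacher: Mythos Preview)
Your proposal is correct and follows essentially the same route as the paper: define $\Phi(\mu)_t = \L(X^\mu_t \mid \tau^\mu > t)$ via Veretennikov's strong existence, derive a Gronwall-type total variation estimate for $\Phi$ through a Girsanov change of measure together with the uniform lower bound on the survival probability from Lemma~\ref{lem:bounds}, and conclude by Banach's fixed-point theorem combined with pathwise uniqueness. The only cosmetic differences are that the paper changes measure directly between the two drifts (rather than from a driftless reference) and obtains the contraction via an exponentially weighted metric instead of iterating $\Phi^n$; both variants are standard and interchangeable.
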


\begin{proof}
We introduce a map $\Phi \define C([0, T]; \P(\R^d)) \to C([0, T]; \P(\R^d))$ defined as follows: for $\mu \in C([0, T]; \P(\R^d))$, let $\nu^{\mu} \in \P(C([0, \infty); \R^d))$ denote the distribution of the unique strong solution of the SDE
\begin{equation} \label{eq:sde_mu_fixed}
    \d X^{\mu}_t = b\bigl(t, X^{\mu}_t, \mu_{t \land T}, a(t, X^{\mu}_t)\bigr) \, \d t + \sigma \, \d W_t
\end{equation}
with initial condition $X^{\mu}_0 = \xi$ and set $\tau_{\mu} = \inf\{t > 0 \define X^{\mu}_t \notin D\}$. The existence of a unique strong solution to SDE \eqref{eq:sde_mu_fixed} is guaranteed by a well-known result of Veretennikov \cite[Theorem 1]{veretennikov_strong_ex_1981}. Note that since $b$ vanishes after $T$, stopping $\mu$ at $T$ does not pose any issues. Then, $\Phi(\mu)$ is given by $\Phi(\mu)_t = \L(X^{\mu}_t \vert \tau_{\mu} > t)$ for $t \in [0, T]$. Note that $[0, T] \ni t \mapsto \L(X^{\mu}_t \vert \tau_{\mu} > t)$ is continuous with respect to the bounded Lipschitz distance on $\P(\R^d)$, so $\Phi$ is well-defined. The former statement uses the fact that $\pr(\tau_{\mu} = t) = 0$ for $t \geq 0$. Since we could not find a reference for this result in the literature, we report a proof in the appendix, see Proposition \ref{prop:hitting_no_atoms}.

We will show that the map $\Phi$ is a contraction on $C([0, T]; \P(\R^d))$ with respect to an appropriately chosen metric. We follow the arguments from the proof of \cite[Proposition C.1]{campi_mfg_absorption_2018}, see also \cite[Section 8.3]{tough_fleming_viot_2022}, designed to show uniqueness of equations of the same type as McKean--Vlasov SDE \eqref{eq:conditional_mv_closed}. Let $\mu^1$, $\mu^2 \in C([0, T]; \P(\R^d))$, set $\nu^i = \nu^{\mu_i}$, $i = 1$, $2$, and let $X = X^{\mu^1}$.
% be the unique in law weak solution of SDE \eqref{eq:sde_mu_fixed} with $\mu = \mu^1$ on some filtered probability space $(\Omega, \F, \bb{F}, \pr)$ which carries a $\bb{F}$-Brownian motion $W$. 
Next, we introduce the measure $\qr$ defined by
\begin{equation*}
    \frac{\d \qr}{\d \pr}\bigg\rvert_{\F} = \cal{E}\biggl(\int_0^{\cdot} \sigma^{-1} \bigl(b_2(t, X_t) - b_1(t, X_t)\bigr) \cdot \d W_t\biggr)_T,
\end{equation*}
where $b_i(t, x) = b\bigl(t, x, \mu^i_{t \land T}, a(t, x)\bigr)$ for $i = 1$, $2$. Note that since $b$ is bounded by Assumption \ref{ass:control_problem} and $\sigma$ is invertible by Item \ref{it:nondeg} of the same assumption, the stochastic exponential is a martingale by Novikov's condition, so that the measure $\qr$ is indeed well-defined and equivalent to $\pr$. Hence, by Girsanov's theorem, the process $B$ defined by $B_t = W_t - \int_0^t \sigma^{-1}(b_2(s, X_s) - b_1(s, X_s)) \, \d s$ is a Brownian motion under $\qr$. Thus, $X$ solves the SDE
\begin{equation*}
    \d X_t = b\bigl(t, X_t, \mu^2_{t \land T}, a(t, X_t)\bigr) \, \d t + \sigma \, \d B_t
\end{equation*}
under $\qr$. This SDE exhibits uniqueness in law, whence $\L^{\qr}(X) = \nu^2$. Consequently, for any $t \in [0, T]$ and all Borel subset $A \subset C([0, \infty); \R^d)$, we have
\begin{align*}
    \bigl\lvert \nu^1_t(A) - \nu^2_t(A)\bigr\rvert = \bigl\lvert \ev\bigl[\mathbf{1}_{\{X_{\cdot \land t} \in A\}} (1 - Z_t)\bigr]\bigr\rvert \leq \sqrt{\nu^1_t(A)} \lVert Z_t - 1\rVert_{L^2} \leq \lVert Z_t - 1\rVert_{L^2},
\end{align*}
where $Z_t = \cal{E}\bigl(\int_0^{\cdot} \sigma^{-1} (b_2(s, X_s) - b_1(s, X_s)) \cdot \, \d W_s\bigr)_t$. Taking the supremum over all Borel sets $A$ implies that $\lVert \nu^1_t - \nu^2_t\rVert_{\textup{TV}} \leq \lVert Z_t - 1\rVert_{L^2}$. Recalling that $Z$ is a martingale started from $1$, we find
\begin{align} \label{eq:tv_bound}
    \lVert \nu^1_t - \nu^2_t\rVert_{\textup{TV}}^2 &\leq \lVert Z_t - 1\rVert_{L^2}^2 = \ev\lvert Z_t\rvert^2 - 2\ev Z_t + 1 = \ev\lvert Z_t\rvert^2 - 1 \notag \\
    &= \ev\int_0^t \bigl\lvert \sigma^{-1} \bigl(b_2(s, X_s) - b_1(s, X_s)\bigr) Z_s\bigr\rvert^2 \, \d s\notag \\
    &\leq c_{\sigma}^2 \ev \int_0^t \bigl\lvert b_2(s, X_s) - b_1(s, X_s)\bigr\rvert^2 \lvert Z_s\rvert^2 \, \d s \notag \\
    &\leq c_{\sigma}^2 C_b^2 \ev \lvert Z_T\rvert^2 \int_0^t \lVert \mu^1_s - \mu^2_s\rVert_{\textup{TV}}^2 \, \d s,
\end{align}
where we applied It\^o's isometry in the second line and used in the last inequality that $\ev \lvert Z_s\rvert^2 \leq \ev \lvert Z_T\rvert^2$ for $s \in [0, T]$. In addition, we used the Lipschitz continuity of $b$ in the measure argument and the invertibility of $\sigma$ guaranteed by Items \ref{it:coefficient} and \ref{it:nondeg} of Assumption \ref{ass:control_problem}. By Lemma \ref{lem:bounds}, we have that $\pr(\tau > T)$ and $\qr(\tau > T)$ are bounded from below by some $\delta > 0$ independent of $\mu^1$ and $\mu^2$. Consequently,
\begin{equation} \label{eq:conditional_tv_bound}
    \bigl\lVert \Phi(\mu^1)_t - \Phi(\mu^2)_t\bigr\rVert_{\textup{TV}} \leq \frac{2}{\delta} \bigl\lVert \pr(X_t \in \cdot, \, \tau > t) - \qr(X_t \in \cdot, \, \tau > t)\bigr\rVert_{\textup{TV}} \leq \frac{2}{\delta} \lVert \nu^1_t - \nu^2_t\rVert_{\textup{TV}}.
\end{equation}
Inserting this into \eqref{eq:tv_bound} yields
\begin{equation*} 
    \sup_{0 \leq s \leq t}\bigl\lVert \Phi(\mu^1)_s - \Phi(\mu^2)_s\bigr\rVert_{\textup{TV}}^2 \leq \frac{4}{\delta^2} \sup_{0 \leq s \leq t} \lVert \nu^1_s - \nu^2_s\rVert_{\textup{TV}}^2 \leq C \int_0^t \sup_{0 \leq u \leq s} \lVert \mu^1_u - \mu^2_u\rVert_{\textup{TV}}^2 \, \d s,
\end{equation*}
where $C = \frac{4 c_{\sigma}^2 C_b^2 \ev\lvert Z_T\rvert^2}{\delta^2}$. Multiplying both sides by $e^{-2Ct}$, integrating over $t \in [0, t']$, and applying integration by parts implies that
\begin{equation*}
    \int_0^{t'} e^{-2Ct} \sup_{0 \leq s \leq t \land T}\bigl\lVert \Phi(\mu^1)_s - \Phi(\mu^2)_s\bigr\rVert_{\textup{TV}}^2 \, \d t \leq \frac{1}{2} \int_0^{t'} e^{-2Ct} \sup_{0 \leq s \leq t \land T} \lVert \mu^1_s - \mu^2_s\rVert_{\textup{TV}}^2 \, \d t.
\end{equation*}
Hence, equipping $C([0, T]; \P(\R^d))$ with the complete metric
\begin{equation*}
    (m^1, m^2) \mapsto \biggl(\int_0^{\infty} e^{-2Ct} \sup_{0 \leq s \leq t \land T} \lVert m^1_s - m^2_s\rVert_{\textup{TV}}^2 \, \d s\biggr)^{1/2},
\end{equation*}
we find that $\Phi$ is a contraction. Consequently, there exists a unique fixed point $\mu$. It follows that the unique strong solution of SDE \eqref{eq:sde_mu_fixed} with this choice of $\mu$, is a strong solution to SDE \eqref{eq:conditional_mv_closed}. Since every solution to the latter equation yields a fixed point of $\Phi$, the contraction property of $\Phi$ implies weak uniqueness for the conditional McKean--Vlasov \eqref{eq:conditional_mv_closed}. Strong uniqueness is then a consequence of the result by Veretennikov \cite[Theorem 1]{veretennikov_strong_ex_1981}. This concludes the proof.
\end{proof}

Next, we show how starting from any admissible control we can construct a closed-loop control with an equal or greater reward. The construction is based on a measurable selection argument, for which we require Assumption \ref{ass:convex} to be in place. Further, we have to assume that the domain $D$ satisfies the \textit{Poincar\'e--Zaremba cone condition}. That is, for any point $x \in \partial D$, there exists an open cone $C$ with apex $x$ and an $\epsilon > 0$ such that $C \cap B_{\epsilon}(x) \subset \bar{D}^c$. This condition translates into a regularity property for the hitting time $\tau_{\alpha, \mu}$ of $X^{\alpha, \mu}$ on $\partial D$ and is relevant for the mimicking argument outlined in Section \ref{sec:introduction}.

\begin{theorem} \label{thm:closed_from_weak}
Let Assumptions \ref{ass:control_problem} and \ref{ass:convex} be satisfied and suppose that $D$ possesses the Poincar\'e--Zaremba cone condition. Then for any admissible control $(\alpha, \mu)$, there exists a closed-loop control $(\tilde{\alpha}, \mu)$ such that $J(\tilde{\alpha}, \mu) \geq J(\alpha, \mu)$. In particular, $V_{\textup{closed}} = V$.
\end{theorem}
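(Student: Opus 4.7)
The plan is to implement the strategy sketched in Section~\ref{sec:introduction}: trade the irregular condition $\{\tau > t\}$ for the level set $\{\Lambda_t = 0\}$ of the occupation-time process $\Lambda_t = \int_0^t \bf{1}_{\{X_s \notin D\}} \, \d s$, apply a measurable selection result to the pair $(X, \Lambda)$ to obtain a candidate feedback, and then use the Brunick--Shreve mimicking theorem to realise a process whose time marginals agree with those of $(X, \Lambda)$ while being driven by this feedback.

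The first step is to show that, for $X = X^{\alpha, \mu}$ associated to an arbitrary $(\alpha, \mu) \in \cal{A}$, the events $\{\tau_{\alpha, \mu} > t\}$ and $\{\Lambda_t = 0\}$ coincide up to a $\pr$-null set for every $t \in [0, T]$. One inclusion is immediate; the reverse combines the Poincar\'e--Zaremba cone condition with the non-degeneracy of $\sigma$ to guarantee that, immediately after hitting $\partial D$, the process $X$ spends strictly positive Lebesgue time outside $\bar{D}$. Consequently, every conditioning on $\{\tau_{\alpha, \mu} > t\}$ appearing in the definitions of $\cal{A}$, $\mu_t$, and the reward $J$ may be replaced by conditioning on $\{\Lambda_t = 0\}$.

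Next, I would apply the Haussmann--Lepeltier selection theorem \cite[Theorem A.9]{haussmann_oc_1990}. For each $(t, x) \in [0, T] \times \R^d$, Assumption~\ref{ass:convex} guarantees that the set
\begin{equation*}
    K(t, x) = \bigl\{\bigl(b(t, x, \mu_t, a), z\bigr) \define a \in A,\, z \leq f(t, x, \mu_t, a)\bigr\}
\end{equation*}
is closed and convex, and it contains the pair $\bigl(\ev[b(t, X_t, \mu_t, \alpha_t) \vert X_t = x, \Lambda_t = \lambda], \ev[f(t, X_t, \mu_t, \alpha_t) \vert X_t = x, \Lambda_t = \lambda]\bigr)$. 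The selection theorem then yields a measurable map $a \define [0, T] \times \R^d \times [0, \infty) \to A$ with
\begin{align*}
    b\bigl(t, x, \mu_t, a(t, x, \lambda)\bigr) &= \ev\bigl[b(t, X_t, \mu_t, \alpha_t) \big\vert X_t = x, \Lambda_t = \lambda\bigr], \\
    f\bigl(t, x, \mu_t, a(t, x, \lambda)\bigr) &\geq \ev\bigl[f(t, X_t, \mu_t, \alpha_t) \big\vert X_t = x, \Lambda_t = \lambda\bigr].
\end{align*}
Applying the Brunick--Shreve mimicking theorem \cite{brunick_mimicking_2013} to the It\^o process $(X, \Lambda)$ then produces, on a suitable filtered probability space, a weak solution $(\tilde{X}, \tilde{\Lambda})$ of
\begin{equation*}
    \d \tilde{X}_t = b\bigl(t, \tilde{X}_t, \mu_t, a(t, \tilde{X}_t, \tilde{\Lambda}_t)\bigr) \, \d t + \sigma \, \d \tilde{W}_t, \qquad \tilde{\Lambda}_t = \int_0^t \bf{1}_{\{\tilde{X}_s \notin D\}} \, \d s,
\end{equation*}
satisfying $\L(\tilde{X}_t, \tilde{\Lambda}_t) = \L(X_t, \Lambda_t)$ for every $t \in [0, T]$.

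Finally, I would define $\tilde{a}(t, x) = a(t, x, 0)$ and set $\tilde{\alpha}_t = \tilde{a}(t, \tilde{X}_t)$. The cone-condition argument applied to $\tilde{X}$ gives $\{\tilde{\tau} > t\} = \{\tilde{\Lambda}_t = 0\}$ a.s., whence the joint-marginal identity yields $\L(\tilde{X}_t \vert \tilde{\tau} > t) = \L(X_t \vert \tau_{\alpha, \mu} > t) = \mu_t$; on the event $\{\tilde{\Lambda}_t = 0\}$ the drift of $\tilde{X}$ equals $b(t, \tilde{X}_t, \mu_t, \tilde{a}(t, \tilde{X}_t))$, so up to $\tilde{\tau}$ the process $\tilde{X}$ is genuinely driven by the feedback $\tilde{a}$, and $(\tilde{\alpha}, \mu)$ is an admissible closed-loop control (the post-exit dynamics can be freely modified without affecting $\mu$ or the reward). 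The $f$-comparison from the selection step, combined with the matching marginals, gives $J(\tilde{\alpha}, \mu) \geq J(\alpha, \mu)$, and passing to the supremum over admissible controls yields $V_{\textup{closed}} = V$. The main obstacle is the almost-sure equivalence $\{\tau > t\} = \{\Lambda_t = 0\}$: this is what converts the hitting-time constraint into a pathwise integral amenable to Brunick--Shreve mimicking and, in the end, permits the auxiliary $\lambda$-dependence of the selected feedback $a$ to be collapsed to $\lambda = 0$ on the event that matters for the conditional distribution and the reward.
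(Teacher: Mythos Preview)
Your proposal follows the same strategy as the paper: replace $\{\tau>t\}$ by $\{\Lambda_t=0\}$ via the cone condition (Proposition~\ref{prop:hitting_boundary}), apply Haussmann--Lepeltier selection to $(X,\Lambda)$, and then Brunick--Shreve mimicking to produce a process with the same $(X_t,\Lambda_t)$-marginals driven by the selected feedback, finally collapsing to $\lambda=0$.

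There is, however, one technical step you gloss over at the end. The mimicking theorem delivers only a \emph{weak} solution $(\tilde X,\tilde\Lambda)$ on an auxiliary probability space $(\tilde\Omega,\tilde\F,\tilde\pr)$, whereas the definition of $\cal{A}$ (and hence of closed-loop controls) in this paper requires an $\bb{F}$-progressively measurable control and a \emph{strong} solution on the original space $(\Omega,\F,\pr)$ with the given Brownian motion $W$. Saying ``the post-exit dynamics can be freely modified'' does not close this gap: what is actually needed is to take the feedback $\tilde a(\cdot,\cdot)=a(\cdot,\cdot,0)$, invoke Veretennikov's strong existence and uniqueness (as in Proposition~\ref{prop:exist_unique}) to build the strong solution $X'$ of
\[
\d X'_t=b\bigl(t,X'_t,\mu_t,\tilde a(t,X'_t)\bigr)\,\d t+\sigma\,\d W_t,\qquad X'_0=\xi,
\]
on $(\Omega,\F,\pr)$, and then use weak uniqueness for this SDE up to the first exit time to identify $\L(X'_{\cdot\wedge\tau'},\tau')=\L(\tilde X_{\cdot\wedge\tilde\tau},\tilde\tau)$. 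This yields $\mu_t=\L(X'_t\mid\tau'>t)$ and places $(\tilde\alpha,\mu)$ with $\tilde\alpha_t=\tilde a(t,X'_t)$ genuinely in $\cal{A}$. This is exactly how the paper completes the argument; once you add this transfer step, your proof is complete and matches the paper's.
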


\begin{proof}
Let $(\alpha, \mu)$ be a weak control, set $X = X^{\alpha, \mu}$, and define $\Lambda_t = \int_0^t \mathbf{1}_{\{X_s \notin \bar{D}\}} \, \d s$. Now, we proceed similarly as in the proof of Theorem 3.6 in \cite{haussmann_oc_1990} to extract a feedback function $\tilde{a}$ from $\alpha$, which depends on both $X_t$ and $\Lambda_t$. Define
\begin{equation*}
    c_h(t, x, \lambda, m) = \ev\bigl[h(t, X_t, m, \alpha_t) \bigr\vert X_t = x, \Lambda_t = \lambda\bigr]
\end{equation*}
for $(t, x, \lambda, m) \in [0, T] \times \R^d \times [0, \infty) \times \P(D)$ and $h \in \{b, f\}$. Then by the convexity property from Assumption \ref{ass:convex}, we have
\begin{equation*}
    (c_b, c_f)(t, x, \lambda, m) \in \Bigl\{\bigl(b(t, x, m, a), z\bigr) \define z \in \R,\, a \in A,\, z \leq f(t, x, m, a)\Bigr\}
\end{equation*}
for all $(t, x, \lambda, m) \in [0, T] \times \R^d \times [0, \infty) \times \P(D)$. Hence, additionally drawing on the continuity and upper semicontinuity of $b$ and $f$ in the control argument, guaranteed by Assumption \ref{ass:convex}, we may apply the measurable selection result \cite[Theorem A.9]{haussmann_oc_1990}, whereby there exists a measurable function $\tilde{a} \define [0, T] \times \R^d \times [0, \infty) \to A$ such that
\begin{align} \label{eq:meas_sel}
\begin{split}
    \ev\bigl[b(t, X_t, \mu_t, \alpha_t) \big\vert X_t, \Lambda_t\bigr] &= c_b(t, X_t, \Lambda_t, \mu_t) = b\bigl(t, X_t, \mu_t, \tilde{a}(t, X_t, \Lambda_t)\bigr) \\
    \ev\bigl[f(t, X_t, \mu_t, \alpha_t) \big\vert X_t, \Lambda_t\bigr] &= c_f(t, X_t, \Lambda_t, \mu_t) \leq f\bigl(t, X_t, \mu_t, \tilde{a}(t, X_t, \Lambda_t)\bigr)
\end{split}
\end{align}
for $\Leb \otimes \pr$-a.e.\@ $(t, \omega) \in [0, \infty) \times \Omega$. Note that the dependence on $\mu_t$ is absorbed into the time argument of $\tilde{a}$. Next, we apply the mimicking theorem by Brunick \& Shreve \cite[Corollary 3.7]{brunick_mimicking_2013} to the It\^o process $(X, \Lambda)$, using the fact that for $\Leb \otimes \pr$-a.e.\@ $(t, \omega) \in [0, \infty) \times \Omega$, we have
\begin{equation*}
    \ev\biggl[
    \begin{pmatrix}
        b(t, X_t, \mu_t, \alpha_t)\\
        \mathbf{1}_{\{X_t \notin \bar{D}\}}
    \end{pmatrix}
    \biggr\vert X_t, \Lambda_t\biggr] = 
    \begin{pmatrix}
        b\bigl(t, X_t, \mu_t, \tilde{a}(t, X_t, \Lambda_t)\bigr)\\
        \mathbf{1}_{\{X_t \notin \bar{D}\}}
    \end{pmatrix}.
\end{equation*}
This yields a probability space $(\tilde{\Omega}, \tilde{\F}, \tilde{\pr})$ carrying a $d$-dimensional Brownian motion $\tilde{W}$ and a continuous $\R^{d + 1}$-valued process $(\tilde{X}, \tilde{\Lambda})$ satisfying the SDE
\begin{equation} \label{eq:sde_mimicking}
    \d \tilde{X}_t = b\bigl(t, \tilde{X}_t, \mu_t, \tilde{a}(t, \tilde{X}_t, \tilde{\Lambda}_t)\bigr) \, \d t + \sigma \, \d \tilde{W}_t, \quad \d \tilde{\Lambda}_t = \mathbf{1}_{\{\tilde{X}_t \notin \bar{D}\}} \, \d t,
\end{equation}
such that $\L(\tilde{X}_t, \tilde{\Lambda}_t) = \L(X_t, \Lambda_t)$ for all $t \in [0, T]$. Next, we claim that $\tau = \inf\{t > 0 \define \Lambda_t > 0\}$ holds $\pr$-a.s.\@ and, similarly, $\tilde{\tau} = \inf\{t > 0 \define \tilde{\Lambda}_t > 0\}$ holds $\tilde{\pr}$-a.s.\@, where $\tilde{\tau} = \inf\{t > 0 \define \tilde{X}_t \notin D\}$. We will only show this for the process $(X, \Lambda)$, the proof for $(\tilde{X}, \tilde{\Lambda})$ is analogous. Since $X_t \in \bar{D}$ on $[0, \tau]$, it holds that $\Lambda_t = 0$ on $[0, \tau]$, whence $\tau \leq \inf\{t > 0 \define \tilde{\Lambda}_t > 0\}$. Let us now establish the converse. By Proposition \ref{prop:hitting_boundary}, $\pr$-a.s.\@ for every $\epsilon > 0$ there exists $t \in (0, \epsilon]$ such that $X_{\tau + t} \notin \bar{D}$. By continuity of $X$, this means that $\Lambda_{\tau + \epsilon} > 0$. Hence, it follows that $\inf\{t > 0 \define \Lambda_t > 0\} \leq \tau + \epsilon$. Letting $\epsilon \to 0$ yields the desired inequality, so that $\pr$-a.s.\@ $\tau = \inf\{t > 0 \define \Lambda_t > 0\}$. This implies that $\mu_t = \L(X_t \vert \tau > 0) = \L(X_t \vert \Lambda_t = 0) = \L(\tilde{X}_t \vert \tilde{\Lambda}_t = 0) = \L(\tilde{X}_t \vert \tilde{\tau} > t)$, so that $\tilde{X}$ is a weak solution to the conditional McKean--Vlasov SDE \eqref{eq:conditional_mv}, where $\alpha_t$ is replaced by $\tilde{a}(t, \tilde{X}_t, \tilde{\Lambda}_t)$. Moreover, in view of Equation \eqref{eq:meas_sel}, we have
\begin{align} \label{eq:cost_lower}
    J(\alpha, \mu) &= \int_0^T \ev\bigl[f(t, X_t, \mu_t, \alpha_t) \big\vert \tau > t \bigr] \, \d t + g(\mu_T) \notag \\
    &\leq \int_0^T \ev\bigl[f\bigl(t, X_t, \mu_t, \tilde{a}(t, X_t, \Lambda_t)\bigr) \big\vert \Lambda_t = 0 \bigr] \, \d t + g(\mu_T) \notag \\
    &= \int_0^T \ev^{\tilde{\pr}}\bigl[f\bigl(t, \tilde{X}_t, \mu_t, \tilde{a}(t, \tilde{X}_t, \tilde{\Lambda}_t)\bigr)\big\vert \tilde{\Lambda}_t = 0\bigr] \, \d t + g(\mu_T) \notag \\
    &= \int_0^T \ev^{\tilde{\pr}}\bigl[f\bigl(t, \tilde{X}_t, \mu_t, \tilde{a}(t, \tilde{X}_t, 0)\bigr)\big\vert \tilde{\tau} > t\bigr] \, \d t + g(\mu_T).
\end{align}
Let us now define $a \define [0, \infty) \times \R^d \to A$ by $a(t, x) = \tilde{a}(t, x, 0)$ and let $X'$ be the unique strong solution to SDE
\begin{equation*}
    \d X'_t = b\bigl(t, X'_t, \mu_t, a(t, X'_t)\bigr) \, \d t + \sigma \, \d W_t
\end{equation*}
with initial condition $X'_0 = \xi$. Note that $X'$ and $\tilde{X}$ satisfy the same SDE up to their respective first hitting time of $D^c$, so uniqueness in law for this SDE implies that $\L(X'_{\cdot \land \tau'}) = \L(\tilde{X}_{\cdot \land \tilde{\tau}})$, where $\tau' = \inf\{t > 0 \define X'_t \notin D\}$. Consequently, $\mu_t = \L(\tilde{X}_t \vert \tilde{\tau} > t) = \L(X'_t \vert \tau' > t)$, so that $X'$ is a strong solution to the conditional McKean--Vlasov SDE \eqref{eq:conditional_mv} with $\alpha$ replaced by $\tilde{\alpha}$ given by $\tilde{\alpha}_t = a(t, X'_t)$. Thus, $(\tilde{\alpha}, \mu)$ is a closed-loop control and by \eqref{eq:cost_lower} it holds that
\begin{align*}
    J(\alpha, \mu) &\leq \int_0^T \ev^{\tilde{\pr}}\bigl[f\bigl(t, \tilde{X}_t, \mu_t, \tilde{a}(t, \tilde{X}_t, 0)\bigr)\big\vert \tilde{\tau} > t\bigr] \, \d t + g(\mu_T) \\
    &= \int_0^T \ev\bigl[f\bigl(t, X'_t, \mu_t, a(t, X'_t)\bigr)\big\vert \tau' > t\bigr] \, \d t + g(\mu_T) \\
    &= J(\tilde{\alpha}, \mu).
\end{align*}
\end{proof}

\begin{remark}
Note that the results in this section also hold if we no longer require that $b$ is bounded but instead assume that $b(t, x, m, a) = a$ and $\ev \int_0^T \lvert \alpha_t\rvert^2 \, \d t < \infty$. This is one of the settings considered by Lions \cite{lions_cond_proc_2016}. We may achieve this by first considering the case of bounded controls and then passing to the limit. In principle, we could also follow this procedure in the case of a more general unbounded $b$ that satisfies suitable growth assumptions, but it becomes more involved if $b$ can depend on the conditional law. We impose the boundedness assumptions on $b$ to avoid these technical issues.
\end{remark}

\begin{remark} \label{rem:comp_car}
The counterpart to Theorem \ref{thm:closed_from_weak} by Carmona \& Lacker \cite{carmona_mimicking_conditional_2024} is established under slightly stronger assumptions, taking the boundary $\partial D$ to be smooth instead of having $D$ satisfy the weaker Poincar\'e--Zaremba cone condition. Moreover, \cite{carmona_mimicking_conditional_2024} only covers the linear setting $b(t, x, m, a) = a$, so, in particular, does not allow for McKean--Vlasov dynamics. However, the proof techniques from \cite{carmona_mimicking_conditional_2024} can likely be extended to our setting without substantial difficulties. 
\end{remark}

\section{Fleming--Viot Dynamics of McKean--Vlasov Type}

In this section, we establish a correspondence between the conditional McKean--Vlasov SDE \eqref{eq:conditional_mv} and Fleming--Viot dynamics of McKean--Vlasov type. We consider an $\R^d$-valued process $(X_t)_{t \geq 0}$ that follows the dynamics
\begin{equation} \label{eq:fleming_viot}
    \d X_t = b\bigl(t, X_t, \mu_t, a(t, X_t)\bigr) \, \d t + \sigma \, \d W_t + \d J_t
\end{equation}
with initial condition $X_0 = \xi$ and $\mu_t = \L(X_t)$. The jump process $J = (J_t)_{t \geq 0}$ is given by $J_t = \sum_{k = 1}^{\infty} \int_0^t (\xi_k - X_{s-}) \, \d \mathbf{1}_{\{\tau_k \leq s\}}$, where the stopping times $(\tau_k)_{k \geq 0}$ are defined by $\tau_0 = 0$ and $\tau_k = \inf\{t > \tau_{k - 1} \define X_{t-} \notin D\}$ for $k \geq 1$. The $D$-valued random variables $(\xi_k)_{k \geq 1}$ satisfy $\L(\xi_k \vert \tau_k = t) = \mu_t$ for $t \geq 0$. Note that there are two subtleties here. Namely, it is not immediately clear whether $\pr(\tau_k = t) = 0$ for all $t \geq 0$, $k \geq 1$ and whether $\tau_{\infty} = \lim_{k \to \infty} \tau_k = \infty$ almost surely. Should either of these properties be violated, McKean--Vlasov SDE \eqref{eq:fleming_viot} is not well-defined. Indeed, if $\pr(\tau_k = t) > 0$ for some $t \geq 0$ and $k \geq 1$, then to avoid a circularity in the definition of the law of $\xi_k$, we should require that $\L(\xi_k \vert \tau_k = t) = \mu_{t-}$ instead of $\L(\xi_k \vert \tau_k = t) = \mu_t$. But with this construction, the representative particle will be reinserted at a location on the boundary $\partial D$ with positive probability, in which case it would immediately jump again. Next, if $\tau_{\infty} < \infty$, then $X_s$ is not defined on all of $\Omega$ for all times $s \geq 0$, so we cannot make sense of the law $\mu_s$ of $X_s$. To circumvent these issues, when we speak of a solution to McKean--Vlasov SDE \eqref{eq:fleming_viot}, we shall always require that $\pr(\tau_k = t) = 0$ for all $t \geq 0$, $k \geq 1$ and $\tau_{\infty} = \infty$ almost surely.

The process $X$ is a mean-field analogue of the Fleming--Viot type particle system introduced by Burdzy et al.\@ \cite{burdzy_flem_viot_1996}. Whenever the representative particle hits the boundary, it is reinserted at a location that is distributed according to the prevailing law $\mu_t$ of the system. As shown below, the law of the solution of \eqref{eq:fleming_viot} coincides with the conditional law of the killed process discussed in Section \ref{sec:equivalence}. First, however, let us clarify how weak and strong solutions for McKean--Vlasov SDE \eqref{eq:fleming_viot} are defined and discuss different notions of uniqueness. This requires some care, as we have to pin down the randomness introduced by the reinsertions. For this we rely on a specific construction of the sequence $(\xi_k)_{k \geq 1}$, which we outline next. We say that a measurable function $H \define \P(\R^d) \times [0, 1] \to \R^d$ is a \textit{representation} of $\P(\R^d)$ if $H(m, \cdot)^{\#}\cal{U}([0, 1]) = m$ for all $m \in \P(\R^d)$. Here the symbol $\#$ in the superscript indicates the pushforward of a measurable function and $\cal{U}([0, 1])$ is the uniform distribution on $[0, 1]$. Representations of $\P(\R^d)$ are guaranteed to exist by \cite[Theorem 1]{blackwell_skorokhod_1983}. Now, if $(U_k)_{k \geq 1}$ is an i.i.d.\@ sequence of random variables, uniformly distributed on $[0, 1]$ and such that $\xi$, $W$, and $U_k$, $k \geq 1$, are independent, then setting $\xi_k = H(\mu_{\tau_k}, U_k)$, we find
\begin{equation*}
    \L(\xi_k \vert \tau_k = t) = \L\bigl(H(\mu_{\tau_k}, U_k) \big\vert \tau_k = t\bigr) = \L(H(\mu_t, U_k)) = \mu_t.
\end{equation*}

With this at hand, we can state our definitions of existence and uniqueness. For convenience, we call $(\Omega, \F, \bb{F}, \pr, W , (U_k)_{k \geq 1})$ a \textit{stochastic setup} if $(\Omega, \F, \bb{F}, \pr)$ is a filtered probability space carrying a $d$-dimensional $\bb{F}$-Brownian motion $W$ and a sequence $(U_k)_{k \geq 1}$ of i.i.d.\@ random variables uniformly distributed on $[0, 1]$ such that $\F_0$, $W$, and $(U_k)_{k \geq 1}$ are independent.

\begin{definition} \label{def:weak_solution}
A \textit{weak solution} of McKean--Vlasov SDE \eqref{eq:fleming_viot} with underlying representation $H$ of $\P(\R^d)$ is a stochastic setup $(\Omega, \F, \bb{F}, \pr, W, (U_k)_{k \geq 1})$ carrying an $\R^d$-valued c\`adl\`ag $\bb{F}$-adapted stochastic process $X$ such that 
\begin{enumerate}
    \item $(W, (X_t)_{t \in [0, \tau_k)})$ is independent of $(U_{\ell})_{\ell \geq k}$ for $k \geq 1$;
    \item $X_{\tau_k} = H(\mu_{\tau_k}, U_k)$ for $k \geq 1$;
    \item SDE \eqref{eq:fleming_viot} holds and $\mu_t = \L(X_t)$ for all $t \geq 0$.
\end{enumerate}
\end{definition}

Note that the random variables $\xi$, $J$, $\tau_k$, and $\xi_k$, $k \geq 1$, implicitly or explicitly appearing in Definition \ref{def:weak_solution} are determined by $X$.

\begin{definition} \label{def:strong_solution}
A \textit{strong solution} of McKean--Vlasov SDE \eqref{eq:fleming_viot} with underlying representation $H$ of $\P(\R^d)$ on a given stochastic setup $(\Omega, \F, \bb{F}, \pr, W, (U_k)_{k \geq 1})$, carrying an $\R^d$-valued $\F_0$-measurable random variable $\xi$, is an $\R^d$-valued c\`adl\`ag stochastic process $X$ such that
\begin{enumerate}
    \item $(X_t)_{t \in [0, \tau_k)}$ is $(\sigma(\xi, W_s, U_{\ell} \define s \in [0, t], 1 \leq \ell \leq k - 1))_{t \geq 0}$-adapted for all $k \geq 1$;
    \item $X_{\tau_k} = H(\mu_{\tau_k}, U_k)$ for $k \geq 1$;
    \item SDE \eqref{eq:fleming_viot} holds and $\mu_t = \L(X_t)$ for all $t \geq 0$.
\end{enumerate}
\end{definition}

Lastly, we state the our notions of pathwise uniqueness and uniqueness in law.

\begin{definition} \label{def:uniqueness}
We say that \textit{pathwise uniqueness} holds if any two weak solutions of McKean--Vlasov SDE \eqref{eq:fleming_viot} with the same underlying representation of $\P(\R^d)$ defined on the same stochastic setup coincide almost surely.

We say that \textit{uniqueness in law} holds if any two weak solutions of McKean--Vlasov SDE \eqref{eq:fleming_viot} with possibly distinct underlying representations of $\P(\R^d)$ have the same law.
\end{definition}

The terminology ``pathwise uniqueness'' and ``uniqueness in law'' is justified by a theorem of Yamada--Watanabe type, Theorem \ref{thm:yw} from Appendix \ref{app:yw}, whereby pathwise uniqueness implies uniqueness in law. In Theorem \ref{thm:correspondence} below, we establish pathwise uniqueness for McKean--Vlasov SDE \eqref{eq:fleming_viot}, which in view of Theorem \ref{thm:yw}, implies uniqueness in law. Note that if we admit a broader class of weak solutions, for which the sequence $(\xi_k)_{k \geq 1}$ is not necessarily constructed from a representation of $\P(\R^d)$ and independent uniform random variables, then uniqueness in law within this broader class no longer holds. A detailed discussion of this can be found in Appendix \ref{app:yw} and Example \ref{ex:counter} therein, highlighting the crucial role played by our explicit construction of $(\xi_k)_{k \geq 1}$.

\begin{theorem} \label{thm:correspondence}
Let Assumption \ref{ass:control_problem} be satisfied. Then, for any representation of $\P(\R^d)$, McKean--Vlasov SDE \eqref{eq:fleming_viot} has a strong solution $X$ and pathwise uniqueness holds. Moreover, we have $\L(X_t) = \L(X'_t \vert \tau' > t)$ for all $t \geq 0$, where $X'$ is the unique strong solution of McKean--Vlasov SDE \eqref{eq:conditional_mv_closed} and $\tau' = \inf\{t > 0 \define X'_t \notin D$\}.
\end{theorem}

\begin{proof}
Let $X'$ be the unique strong solution of the conditional McKean--Vlasov SDE \eqref{eq:conditional_mv_closed}, guaranteed to exist by Proposition \ref{prop:exist_unique}, and set $\mu_t = \L(X'_t \vert \tau' > t)$, where $\tau' = \inf\{t > 0 \define X'_t \notin D$\}. We will use the flow $\mu = (\mu_t)_{t \geq 0}$ to construct a strong solution $X$ to McKean--Vlasov SDE \eqref{eq:fleming_viot} for a given representation $H$ of $\P(\R^d)$. In fact, we simply obtain $X$ as a strong solution of the SDE \eqref{eq:fleming_viot}, where we view the appearance of $\mu_t$ both in the coefficient $b$ and to describe the conditional law $\L(\xi_k \vert \tau_k = t)$ of the random variables $\xi_k$ as a fixed input rather than being determined by the SDE. Note that a strong solution to this SDE can be constructed in an iterative fashion, but it is a priori only defined up to the time $\tau_{\infty} = \lim_{k \to \infty} \tau_k \in [0, \infty]$. Indeed, we first solve SDE \eqref{eq:fleming_viot} up to the stopping time $\tau_1 = \inf\{t > 0 \define X_{t-} \notin D\}$. Then, we set $\xi_1 = H(\mu_{\tau_1}, U_1)$, insert the jump from $X_{\tau_1-}$ to $\xi_k$ at time $\tau_1$, and solve SDE \eqref{eq:fleming_viot} from $\tau_1$ onwards up to the stopping time $\tau_2 = \inf\{t > \tau_1 \define X_{t-} \notin D\}$. We iterate this procedure indefinitely, yielding the desired solution. By Proposition \ref{prop:hitting_no_atoms}, we have that $\pr(\tau_k = t) = 0$ for all $t \geq 0$ and $k \geq 1$. Note further that the constructed solution is strong in the sense of Definition \ref{def:strong_solution} (except, of course, for the fact that we have not verified $\L(X_t) = \mu_t$ yet). Moreover, pathwise uniqueness in the usual sense holds on the segments $[\tau_k, \tau_{k + 1})$, $k \geq 0$, where $\tau_0 = 0$, and the random variables $\xi_k$ are uniquely determined by $U_k$ and the representation $H$. Consequently, pathwise uniqueness as stated in Definition \ref{def:uniqueness} holds for SDE \eqref{eq:fleming_viot} with $(\mu_t)_{t \geq 0}$ as a fixed input among solutions with the underlying representation $H$ of $\P(\R^d)$. 

Our next goal will be to show that $\tau_{\infty} = \infty$ almost surely. To that end, let us define the function $F^n_t = \sum_{k = 1}^n \pr(\tau_k \leq t)$ for $n \geq 1$ as well as $F_t = \sum_{k = 1}^{\infty} \pr(\tau_k \leq t)$. It holds for $k \geq 2$ that
\begin{align} \label{eq:conv_for_prob}
    \pr(\tau_k \leq t) &= \ev[\pr(\tau_k \leq t \vert \tau_{k - 1})] \notag \\
    &= \int_0^t \pr(\tau_k - \tau_{k - 1} \leq t - s \vert \tau_{k - 1} = s) \, \d \pr(\tau_{k - 1} \leq s) \notag \\
    &= \int_0^t \pr(\tau_{s, \mu_s} \leq t - s) \, \d \pr(\tau_{k - 1} \leq s).
\end{align}
Here for $s \geq 0$ and $m \in \P(D)$, $X^{s, m}$ denotes the unique strong solution to
\begin{equation} \label{eq:restarted}
    \d X^{s, m}_u = b\bigl(s + u, X^{s, m}_u, \mu_{s + u}, a(s + u, X^{s, m}_u)\bigr) \, \d t + \sigma \, \d (W_{s + u} - W_s)
\end{equation}
with initial condition $X^{s, m}_0 \sim m$ and $\tau_{s, m} = \inf\{u > 0 \define X^{s, m}_u \notin D\}$. (Note that this should be distinguished from the process $X^{\alpha, \mu}$ that we introduced in Section \ref{sec:equivalence}.) From \eqref{eq:conv_for_prob}, we deduce
\begin{align} \label{eq:conv_for_finite_firing}
    F^n_t &= \pr(\tau_1 \leq t) + \sum_{k = 2}^n \int_0^t \pr(\tau_{s, \mu_s} \leq t - s) \, \d \pr(\tau_{k - 1} \leq s) \notag \\
    &= \pr(\tau_1 \leq t) + \int_0^t \pr(\tau_{s, \mu_s} \leq t - s) \, \d F^{n - 1}_s
\end{align}
for $n \geq 2$. 

Next, we claim that for any $t \geq 0$ there exists $p_t \in [0, 1)$ such that $\pr(\tau_{s, \mu_s} \leq t - s) \leq p_t$ for $s \in [0, t]$. If this is indeed the case, then it follows from the above equality that $F^n_t \leq \pr(\tau_1 \leq t) + p_t F^{n - 1}_t \leq \pr(\tau_1 \leq t) + p_t F^n_t$. Rearranging gives $F^n_t \leq \frac{1}{1 - p_t} \pr(\tau_1 \leq t)$, so that $F_t = \lim_{n \to \infty} F^n_t \leq \frac{1}{1 - p_t} \pr(\tau_1 \leq t) < \infty$ for all $t \geq 0$. Moreover, taking the limit $n \to \infty$ in \eqref{eq:conv_for_finite_firing} yields
\begin{equation} \label{eq:conv_for_firing}
    F_t = \pr(\tau_1 \leq t) + \int_0^t \pr(\tau_{s, \mu_s} \leq t - s) \, \d F_s.
\end{equation}
Let us prove the claim. Fix $t \geq 0$. First note that since $\mu$ is continuous in the topology of weak convergence, we can find $\epsilon > 0$ such that $\mu_s(D_{\epsilon}) \geq \epsilon$ for all $s \in [0, T]$, where $D_{\delta} = \{x \in D \define d(x, \partial D) > \delta\}$ for $\delta > 0$. Now, we set $A_{t, s} = \{\tau_{s, \mu_s} \leq t - s\}$ and $X^s = X^{s, \mu_s}$, and decompose
\begin{align*}
    \pr(\tau_{s, \mu_s} \leq t - s) &= \pr\bigl(\tau_{s, \mu_s} \leq t - s \big\vert X^s_0 \in D_{\epsilon}\bigr) \mu_s(D_{\epsilon}) + \pr\bigl(\tau_{s, \mu_s} \leq t - s \big\vert X^s_0 \notin D_{\epsilon}\bigr) \mu_s(D_{\epsilon}^c) \\
    &\leq \pr\bigl(\tau_{s, \mu_s} \leq t - s \big\vert X^s_0 \in D_{\epsilon}\bigr) \mu_s(D_{\epsilon}) + \mu_s(D_{\epsilon}^c) \\
    &= 1 - \bigl(1 - \pr\bigl(\tau_{s, \mu_s} \leq t - s \big\vert X^s_0 \in D_{\epsilon}\bigr)\bigr)\mu_s(D_{\epsilon}) \\
    &\leq 1 - \epsilon\bigl(1 - \pr\bigl(\tau_{s, \mu_s} \leq t - s \big\vert X^s_0 \in D_{\epsilon}\bigr)\bigr).
\end{align*}
Hence, if we can show that there is $q_t \in [0, 1)$ with  $\pr(\tau_{s, \mu_s} \leq t - s \vert X^s_0 \in D_{\epsilon}) \leq q_t$ for all $s \in [0, t]$, then setting $p_t = 1 - \epsilon(1 - q_t)$, the claim follows. However, it holds that 
\begin{align*}
    \pr\bigl(\tau_{s, \mu_s} \leq t - s \big\vert X^s_0 \in D_{\epsilon}\bigr) &= \frac{1}{\mu_s(D_{\epsilon})}\int_{D_{\epsilon}} \pr(\tau_{s, \mu_s} \leq t - s \vert X^s_0 = x) \, \d \mu_s(x) \\
    &= \frac{1}{\mu_s(D_{\epsilon})}\int_{D_{\epsilon}} \pr(\tau_{s, \delta_x} \leq t - s) \, \d \mu_s(x)
\end{align*}
For any $x \in D_{\epsilon}$, we can bound $\pr(\tau_{s, \delta_x} \leq t - s)$ by the probability that $X^{s, \delta_x}$ exits $B_{\epsilon}(x) \subset D$ before time $t - s$. The latter can be shown to be bounded by $q_t \in [0, 1)$ independently of $s \in [0, t]$. This concludes the proof of the claim. Note that the function $t \mapsto \pr(\tau_{s, \mu_s} \leq t - s)$ is nondecreasing, so we may choose $p_t$ such that $t \mapsto p_t$ is nondecreasing and right-continuous.

Next, we will show that \eqref{eq:conv_for_firing} also holds for $-\log \pr(\tau' > t)$ in place of $F_t$. The processes $X$ and $X'$ solve the same SDE up to their respective first hitting times $\tau_1$ and $\tau'$ of $D$. Hence, for $0 \leq s \leq t$, we have
\begin{equation} \label{eq:exp_identity}
    \pr(\tau' > t) = \pr(\tau_1 > t) = \pr(\tau_1 > s) \pr(\tau_1 > t \vert \tau_1 > s) = \pr(\tau' > s) \pr(\tau_{s, \mu_s} > t - s).
\end{equation}
Rearranging gives $-1 = -\frac{\pr(\tau' > t)}{\pr(\tau' > s)} - \pr(\tau_{s, \mu_s} \leq t - s)$. We divide by $\pr(\tau' > s)$ on both sides and integrate over $s \in [0, t]$ with respect to $s \mapsto \pr(\tau' > s)$, whence
\begin{align} \label{eq:conv_for_log_prob}
    -\log\pr(\tau' > t) &= - \int_0^t \frac{\pr(\tau' > t)}{\pr(\tau' > s)^2} \, \d \pr(\tau' > s) - \int_0^t \pr(\tau_{s, \mu_s} \leq t - s) \, \d \log \pr(\tau' > s) \notag \\
    &= \pr(\tau' \leq t) - \int_0^t \pr(\tau_{s, \mu_s} \leq t - s) \, \d \log \pr(\tau' > s)
\end{align}
as required. Now, if we can show that the convolution equation $f_t = \pr(\tau' \leq t) + \int_0^t \pr(\tau_{s, \mu_s} \leq t - s) \, \d f_s$ has a unique solution, then it follows from \eqref{eq:conv_for_firing} and \eqref{eq:conv_for_log_prob} that $F_t = -\log \pr(\tau' > t)$. We shall establish this next. Let $L$ be the space of nondecreasing c\`adl\`ag functions $f \define [0, \infty) \to \R$ such $f_t \leq \frac{1}{1 - p_t} \pr(\tau_1 \leq t)$ for $t \geq 0$. Since $t \mapsto p_t$ is nondecreasing and right-continuous, the space $L$ is a complete lattice with the partial order $\leq$ defined by $f \leq g$ if $f_t \leq g_t$ for all $t \in [0, \infty)$ and the meet and join of a subset $A \subset L$ given by the right-continuous modifications of the nondecreasing functions $t \mapsto \inf_{f \in A} f_t$ and $t \mapsto \sup_{f \in A} f_t$.
% By Proposition \ref{prop:comp_lattice}, the space $L$ is a complete lattice with respect to the partial order $f \leq g$ if $f_t \leq g_t$ for all $t \in [-1, \infty)$. 
Moreover, the function from $L$ to $L$ that maps $f \in L$ to $t \mapsto \pr(\tau_1 \leq t) + \int_0^t \pr(\tau_{s, \mu_s} \leq t - s) \, \d f_s$ is monotonic. Thus, the Tarski fixed-point theorem guarantees the existence of a minimal fixed point $f^{\ast} \in L$. Clearly, this fixed point solves the equation $f^{\ast}_t = \pr(\tau_1 \leq t) + \int_0^t \pr(\tau_{s, \mu_s} \leq t - s) \, \d f^{\ast}_s$. Assume that $f$ is another solution, so that by minimality of $f^{\ast}$, we have $\Delta_t = f_t - f^{\ast}_t \geq 0$ for any $t \geq 0$. Then we find
\begin{equation*}
    \Delta_t = \int_0^t \pr(\tau_{s, \mu_s} \leq t - s) \, \d \Delta_s \leq p_t \Delta_t.
\end{equation*}
Since $p_t \in [0, 1)$, this can only be the case if $\Delta_t = 0$, so $f^{\ast}$ is indeed the unique solution to the convolution equation.

Let us now use an idea similar to \eqref{eq:conv_for_prob} to derive a convolution identity for $\L(X_t)$. We have for any $k \geq 1$ that
\begin{align*}
    \pr\bigl(X_t \in \cdot,\, t \in [\tau_k, \tau_{k + 1})\bigr) &= \int_0^t \pr\bigl(X_t \in \cdot,\, t \in [\tau_k, \tau_{k + 1}) \big\vert \tau_k = s\bigr) \, \d \pr(\tau_k \leq s) \\
    & = \int_0^t \pr\bigl(X_t \in \cdot,\, \tau_{k + 1} - \tau_k > t - s \big\vert \tau_k = s\bigr) \, \d \pr(\tau_k \leq s) \\
    &= \int_0^t \pr\bigl(X^{s, \mu_s}_{t - s} \in \cdot,\, \tau_{s, \mu_s} > t - s\bigr) \, \d \pr(\tau_k \leq s).
\end{align*}
Consequently, we obtain that
\begin{align} \label{eq:conv_for_law}
    \L(X_t) &= \pr(X_t \in \cdot,\, \tau_1 > t) + \sum_{k = 1}^{\infty} \pr(X_t \in \cdot,\, t \in [\tau_k, \tau_{k + 1})) \notag \\
    &= \pr(X_t \in \cdot,\, \tau_1 > t) + \int_0^t \pr\bigl(X^{s, \mu_s}_{t - s} \in \cdot,\, \tau_{s, \mu_s} > t - s\bigr) \, \d F_s.
\end{align}
An analogous identity can be deduced for $\mu_t$. Indeed, similarly to \eqref{eq:exp_identity}, we have
\begin{equation} \label{eq:extended_equality}
    \mu_t = \frac{\pr(\tau' > s)}{\pr(\tau' > t)} \pr\bigl(X'_t \in \cdot,\, \tau' > t \big\vert \tau' > s\bigr) = \frac{\pr(\tau' > s)}{\pr(\tau' > t)} \pr\bigl(X^{s, \mu_s}_{t - s} \in \cdot,\, \tau_{s, \mu_s} > t - s\bigr)
\end{equation}
for $s \in [0, t]$. Here we used in the second equality that conditional on $\{\tau' > s\}$, the process $(X'_{s + u})_{u \geq 0}$ is a solution to SDE \eqref{eq:restarted} with initial condition distributed according to $\mu_s = \L(X'_s \vert \tau' > s)$. Uniqueness in law of SDE \eqref{eq:restarted} therefore implies $\L((X'_u)_{u \geq s} \vert \tau' > s) = \L((X^{s, \mu_s}_u)_{u \geq 0})$, which gives the second equality in \eqref{eq:extended_equality}. Then we can proceed in the same fashion as below \eqref{eq:exp_identity}. We multiply both sides of \eqref{eq:extended_equality} by $\frac{\pr(\tau' > t)}{\pr(\tau' > s)^2}$ and integrate over $s \in [0, t]$ with respect to $s \mapsto \pr(\tau' > s)$. On the left-hand side, we have
\begin{equation*}
    \mu_t \int_0^t \frac{\pr(\tau' > t)}{\pr(\tau' > s)^2} \, \d \pr(\tau' > s) = - \mu_t \bigl(1 - \pr(\tau' > t)\bigr) = -\mu_t + \pr(X'_t \in \cdot,\, \tau' > t),
\end{equation*}
while on the right-hand side, we get
\begin{equation*}
    \int_0^t \frac{\pr\bigl(X^{s, \mu_s}_{t - s} \in \cdot,\, \tau_{s, \mu_s} > t - s\bigr)}{\pr(\tau' > s)} \, \d \pr(\tau' > s) = \int_0^t \pr\bigl(X^{s, \mu_s}_{t - s} \in \cdot,\, \tau_{s, \mu_s} > t - s\bigr) \, \d \log \pr(\tau' > s).
\end{equation*}
Using that $\pr(X'_t \in \cdot,\, \tau' > t) = \pr(X_t \in \cdot,\, \tau_1 > t)$ because $X'$ and $X$ solve the same unique in law SDE up to the first hitting time of $\partial D$, and rearranging yields
\begin{equation*}
    \mu_t = \pr(X_t \in \cdot,\, \tau_1 > t) - \int_0^t \pr\bigl(X^{s, \mu_s}_{t - s} \in \cdot,\, \tau_{s, \mu_s} > t - s\bigr) \, \d \log \pr(\tau' > s).
\end{equation*}
Since $-\log \pr(\tau' > s) = F_s$, the right-hand side above coincides with the right-hand side of Equation \eqref{eq:conv_for_law}, so we can conclude that $\L(X_t) = \mu_t = \L(X'_t \vert \tau' > t)$. Hence, we have constructed a strong solution $X$ to McKean--Vlasov SDE \eqref{eq:fleming_viot} for which $\L(X_t) = \L(X'_t \vert \tau' > t)$. 

It remains to establish pathwise uniqueness for McKean--Vlasov SDE \eqref{eq:fleming_viot} again understood as in Definition \ref{def:uniqueness}. We have already seen that when we view $(\mu_t)_{t \geq 0}$ as a fixed input, SDE \eqref{eq:fleming_viot} satisfies pathwise uniqueness in this sense. Thus, to show that pathwise uniqueness holds for McKean--Vlasov SDE \eqref{eq:fleming_viot}, it is enough to prove uniqueness in marginal law, i.e.\@ the flow $(\mu_t)_{t \geq 0}$ of marginal laws is the same for all weak solutions of McKean--Vlasov SDE \eqref{eq:fleming_viot}. To achieve this, we can simply invert the sequence of arguments from above, which only used the properties postulated for weak solutions in Definition \ref{def:weak_solution}. For any weak solution $\tilde{X}$ to McKean--Vlasov SDE \eqref{eq:fleming_viot} with corresponding hitting times $\tilde{\tau_k}$, $k \geq 1$, and law $\tilde{\mu}_t = \L(\tilde{X}_t)$ for $t \geq 0$, it follows that $\tilde{F}_t = \sum_{k = 1}^{\infty} \pr(\tilde{\tau}_k \leq t) = -\log \pr(\tilde{\tau}_1 > t)$. Then we proceed as below Equation \eqref{eq:conv_for_law} to show that
\begin{align*}
    \L(\tilde{X}_t \vert \tau_1 > t) &= \pr(\tilde{X}_t \in \cdot,\, \tilde{\tau}_1 > t) - \int_0^t \pr\bigl(X^{s, \tilde{\mu}_s}_{t - s} \in \cdot,\, \tau_{s, \tilde{\mu}_s} > t - s\bigr) \, \d \log \pr(\tilde{\tau}_1 > s) \\
    &= \pr(\tilde{X}_t \in \cdot,\, \tilde{\tau}_1 > t) + \int_0^t \pr\bigl(X^{s, \tilde{\mu}_s}_{t - s} \in \cdot,\, \tau_{s, \tilde{\mu}_s} > t - s\bigr) \, \d F_s \\
    &= \L(\tilde{X}_t).
\end{align*}
Now, let $\tilde{X}'$ be the unique strong solution to
\begin{equation*}
    \d \tilde{X}'_t = b\bigl(t, \tilde{X}'_t, \L(\tilde{X}_t), a(t, \tilde{X}'_t)\bigr) \, \d t + \sigma \, \d W_t
\end{equation*}
with initial condition $\tilde{X}'_0 = \xi$. By uniqueness in law for classical SDEs, the distributions of $(\tilde{X}'_{t \land \tilde{\tau}'})_{t \geq 0}$, where $\tilde{\tau}' = \inf\{t > 0 \define \tilde{X}'_t \notin D\}$, and $(\tilde{X}_{t \land \tilde{\tau}_1})_{t \geq 0}$ coincide, which means that $\L(\tilde{X}_t) = \L(\tilde{X}_t \vert \tilde{\tau}_1 > t) = \L(\tilde{X}'_t \vert \tilde{\tau}' > t)$. Consequently, $\tilde{X}'$ is a weak solution to the conditional McKean--Vlasov SDE \eqref{eq:conditional_mv_closed} which exhibits uniqueness in law by Proposition \ref{prop:exist_unique}. Hence, $\L(\tilde{X}_t) = \L(\tilde{X}'_t \vert \tilde{\tau}' > t)$ must coincide with the distribution $\mu_t = \L(X_t)$ from the solution constructed above. But the SDE \eqref{eq:fleming_viot}, where the flow of measures $\mu$ is viewed as a fixed input, exhibits uniqueness in marginal law (as can be seen from a simple inductive argument), so we can conclude that $\L(\tilde{X}_t) = \L(X_t)$. This completes the proof.
\end{proof}

\begin{remark}
Extending the work of Burdzy, Ho\l{}yst, Ingerman \& March \cite{burdzy_flem_viot_2000}, Tough \& Nolen \cite{tough_fleming_viot_2022} introduced a particle system associated with McKean--Vlasov SDE \eqref{eq:fleming_viot}, where, upon hitting $\partial D$, a particle is reinserted at a position sampled uniformly at random amongst the current locations of the remaining particles. They show (cf.\@ \cite[Theorem 2.9]{tough_fleming_viot_2022}) that, under suitable continuity assumptions on the drift coefficient, the empirical measure $\mu^N_t$ of the particle system of size $N \geq 1$ converges in probability to the conditional law $\mu_t = \L(X'_t \vert \tau' > t)$, where $X'$ solves the conditional McKean--Vlasov SDE \eqref{eq:conditional_mv}. The converge is with respect to the weak convergence of measures on the space $\P(D)$ and locally uniform in time. In view of Theorem \ref{thm:correspondence}, $\mu^N_t$ also converges to the time marginal of the Fleming--Viot dynamics of McKean--Vlasov type \eqref{eq:fleming_viot} and it stands to reason that this convergence can be extended to whole trajectories of the particles. This is an interesting avenue for future research.

Let us further note that \cite[Theorem 2.9]{tough_fleming_viot_2022} shows that the average number of reinsertions $F^N_t$ in the particle system up to time $t$ tends to $-\log \pr(\tau' > t)$. This is in accordance with our finding that $F_t = -\log \pr(\tau' > t)$ and implies that $F^N_t \to F_t$ in probability locally uniformly in time.
\end{remark}

We can now introduce a control problem for McKean--Vlasov SDE \eqref{eq:fleming_viot} in closed-loop form, which provides an alternative interpretation of the control problem for the conditional McKean--Vlasov SDE \eqref{eq:conditional_mv}. The controller's objective is to maximise the reward functional
\begin{equation}
    J_{\textup{FV}}(a) = \ev\biggl[\int_0^T f\bigl(t, X_t, \mu_t, a(t, X_t)\bigr) \, \d t - c F_T + g(\mu_T)\biggr]
\end{equation}
subject to McKean--Vlasov SDE \eqref{eq:fleming_viot} over all measurable functions $a \define [0, \infty) \times \R^d \to A$. Here $F_t = \sum_{k = 1}^{\infty} \pr(\tau_k \leq t)$ and $c \geq 0$ captures the cost of reinsertion. If $c = 0$, it follows from Theorem \ref{thm:correspondence} that $J_{\textup{FV}}(a) = J_{\textup{CP}}(a)$, where $J_{\textup{CP}}$ is the closed-loop reward functional for the control of conditional processes defined above Proposition \ref{prop:exist_unique}. Hence, the control problem for McKean--Vlasov SDE \eqref{eq:fleming_viot} and for the conditional McKean--Vlasov SDE \eqref{eq:conditional_mv} are equivalent. Even if $c > 0$, $J_{\textup{FV}}(a)$ can be written in terms of the conditional dynamics \eqref{eq:conditional_mv} by using the correspondence $F_t = -\log \pr(\tau' > t)$ established in the proof of Theorem \ref{thm:correspondence}. We leave the analysis of this McKean--Vlasov control problem for future work.

\appendix

\section{Hitting Times of Diffusions in Open Sets}

In this appendix, we derive regularity results for the hitting time of nondegenerate diffusions on the boundary of open sets. For the remainder of this section, let $\xi$ be an $\R^d$-valued $\F_0$-measurable random variable, $W$ be a $d$-dimensional Brownian motion, and $\sigma \in \R^{d \times d}$ be an invertible matrix. For an $\bb{F}$-progressively measurable process $\beta = (\beta_t)_{t \geq 0} \in L^{\infty}([0, \infty) \times \Omega; \R^d)$, we define $X^{\beta} = (X^{\beta}_t)_{t \geq 0}$ by $X^{\beta}_t = \xi + \int_0^t \beta_s \, \d s + \sigma W_t$ for $t \geq 0$.

\begin{lemma} \label{lem:bounds}
Let $D \subset \R^d$ be an open set, assume that $\xi$ takes values in $D$, and set $\tau_{\beta} = \inf\{t > 0 \define X^{\beta}_t \notin D\}$. Then for all $C > 0$ and $t \geq 0$, $\pr(\tau_{\beta} > t)$ is bounded away from zero uniformly over all $\bb{F}$-progressively measurable processes $\beta \in L^{\infty}([0, \infty) \times \Omega; \R^d)$ with $\lvert \beta_s \rvert < C$ for $\Leb \otimes \pr$-a.e.\@ $(s, \omega) \in [0, \infty) \times \Omega$.
\end{lemma}

\begin{proof}
Let us fix $C > 0$ and $t \geq 0$ and a sequence $(\beta^n)_{n \geq 1}$ in the space $L^{\infty}_C$ of $\bb{F}$-progressively measurable processes $\beta \in L^{\infty}([0, \infty) \times \Omega; \R^d)$ with $\lvert \beta_s \rvert < C$ for $\Leb \otimes \pr$-a.e.\@ $(s, \omega) \in [0, \infty) \times \Omega$ such that $\lim_{n \to \infty} \pr(\tau_n > t) = \inf_{\beta \in L^{\infty}_C} \pr(\tau_{\beta} > t)$, where $\tau_n = \tau_{\beta^n}$ and $X^n = X^{\beta^n}$. Since $(\beta^n)_{n \geq 1}$ is uniformly bounded, the Arzel\`a--Ascoli theorem guarantees the tightness of the sequence $(A^n)_{n \geq 1}$ with $A^n = \int_0^{\cdot} \beta^n_s \, \d s$. Hence, selecting a subsequence and enlarging the filtered probability space $(\Omega, \F, \bb{F}, \pr)$ if necessary, we may assume there exists a random variable $A$ with values in $C([0, \infty); \R^d)$ such that $(\xi, A^n, W)_{n \geq 1}$ converges weakly on $\R^d \times C([0, \infty); \R^d) \times C([0, \infty); \R^d)$ to $(\xi, A, W)$. In particular, it holds that $X^n \Rightarrow X = \xi + A + W$ on $C([0, \infty); \R^d)$. We set $\tau = \inf\{t > 0 \define X_t \notin D\}$. Now, we want to establish two facts: firstly, $\pr(\tau > t) > 0$ and, secondly, $\liminf_{n \to \infty} \pr(\tau_n > t) \geq \pr(\tau > t)$. The first statement can be easily proven by a change of measure argument through which we remove the drift of $X$. This relies on the uniform boundedness of $(\beta^n)_{n \geq 1}$, whereby $\lvert A_s - A_u\rvert \leq C \lvert s - u\rvert$, so that $A_s = \int_0^s \beta_u \, \d u$ for some $\beta \in L^{\infty}_C$. It also uses the invertibility of $\sigma$. We will skip the details here. For the second fact, let us note that $\tau_n \leq t$ if and only if $\inf_{s \in [0, t]} d(X^n_s, \partial D) = 0$. But the expression $C([0, \infty); \R^d) \ni x \mapsto \inf_{s \in [0, t]} d(x_s, \partial D)$ is continuous, meaning that the set of $x \in C([0, \infty); \R^d)$ for which $\inf_{s \in [0, t]} d(x_s, \partial D) = 0$ is closed. Consequently, the Portmanteau theorem implies
\begin{align*}
    \limsup_{n \to \infty} \pr(\tau_n \leq t) &= \limsup_{n \to \infty}\pr\Bigl(\inf_{s \in [0, t]} d(X^n_s, \partial D) = 0\Bigr) \\
    &\leq \pr\Bigl(\inf_{s \in [0, t]} d(X_s, \partial D) = 0\Bigr) \\
    &= \pr(\tau \leq t).
\end{align*}
From this we deduce $\liminf_{n \to \infty} \pr(\tau_n > t) \geq \pr(\tau > t) > 0$.
\end{proof}

From now on, we fix an $\bb{F}$-progressively measurable process $\beta = (\beta_t)_{t \geq 0} \in L^{\infty}([0, \infty) \times \Omega; \R^d)$ and set $X = X^{\beta}$.

\begin{proposition} \label{prop:hitting_no_atoms}
Let $D \subset \R^d$ be an open set, assume $\xi$ takes values in $D$, and set $\tau = \inf\{t > 0 \define X_t \notin D\}$. Then $\pr(\tau = t) = 0$ for all $t \geq 0$.
\end{proposition}

\begin{proof}
We can assume without loss of generality that $D$ includes the origin, that $\xi = 0 \in D$, that $\beta$ vanishes, and that $\sigma$ is the identity matrix in $d$ dimension. For more details, see the proof of Proposition \ref{prop:hitting_boundary} below.

For the purpose of deriving a contradiction, let us suppose there exists $t_0 > 0$ such that $\pr(\tau = t_0) > 0$. We will show that this implies $\pr(\tau = t) > 0$ for any $t \in (0, t_0]$, which is clearly a contradiction. For $t > 0$, we define $\pr_t = \pr(\cdot \vert \tau > t)$, let $\ev_t$ denote the expectation under $\pr_t$, and let $p_t$ be the density of the law of $W_t$ under $\pr_t$. This density exists because the law of $W_t$ under $\pr_t$ is absolutely continuous with respect to the Lebesgue measure on $\R^d$ and it holds that $p_t(x) > 0$ for a.e.\@ $x$ in the same connected component $C$ of $D$ as $0$. To verify the latter, it is enough to show that for any $z \in C$ and any $\delta > 0$ such that $B_{\delta}(z)$ lies in $C$, we have that $p_t(x) > 0$ for a.e.\@ $x \in B_{\delta}(z)$. Fix such $z \in C$ and $\delta > 0$ and choose a continuous path $\gamma \define [0, 1] \to C$ with $\gamma(0) = 0$ and $\gamma(1) = z$. For each $s \in [0, 1]$, we can find $\delta_s > 0$ with $B_{\delta_s}(\gamma(s)) \subset C$. Since the image of $\gamma$ is compact, we can select finitely many points $0 \leq t_1 < \dots < t_n \leq 1$ so that $\gamma([0, 1])$ is covered by $\bigcup_{i = 1}^n B_{\delta_{t_i}}(\gamma(t_i))$. Let us define
\begin{equation*}
    \tilde{C} = B_{\delta}(z) \cup \biggl(\bigcup_{i = 1}^n B_{\delta_{t_i}}(\gamma(t_i))\biggr),
\end{equation*}
which is an open connected subset of $C$ with Lipschitz boundary, containing the points $0$ and $z$. Set $\tilde{\tau} = \inf\{s > 0 \define W_s \notin \tilde{C}\}$ and let $\tilde{p}_t$ be the density of the measure $\pr(W_t \in \cdot,\, \tilde{\tau} > t)$, which clearly satisfies $\tilde{p}_t(x) > 0$ for a.e.\@ $x \in \tilde{C}$. Since $\tilde{\tau} \leq \tau$, it holds that
\begin{equation*}
    p_t(x) \geq \frac{\tilde{p}_t(x)}{\pr(\tau > t)} > 0
\end{equation*}
for a.e.\@ $x \in B_{\delta}(z) \subset \tilde{C}$, as required.

Next, let us fix $t \in (0, t_0]$, set $\epsilon = t/2$, and define $q = \pr_{t_0 - \epsilon}(\tau = t_0) > 0$. Then, we let $K$ be the set of $x \in C$ such that $\pr_{t_0 - \epsilon}(\tau = t_0 \vert W_{t_0 - \epsilon} = x) \geq q/2$. Note that the set $K$ is measurable and has positive Lebesgue measure, for otherwise it would hold that
\begin{align*}
    \pr_{t_0 - \epsilon}(\tau = t_0) &= \int_C \pr_{t_0 - \epsilon}(\tau = t_0 \vert W_{t_0 - \epsilon} = x) p_{t_0 - \epsilon}(x) \, \d x \\
    &\leq \int_K p_{t_0 - \epsilon}(x) \, \d x + \int_{K^c} \frac{q}{2} p_{t_0 - \epsilon}(x) \, \d x \\
    &\leq \frac{q}{2},
\end{align*}
which cannot be true since $q = \pr_{t_0 - \epsilon}(\tau = t_0) > 0$ by assumption.

Now, by the Markov property of Brownian motion, it holds that $W^{(s)}$, defined by $W^{(s)}_u = W_{s + u} - W_s$, is a Brownian motion independent of $\F_s$. Hence, setting $\rho_s(x) = \inf\{u > s \define W^{(s)}_{u - s} + x \notin D\}$ for $x \in C$, we find
\begin{align} \label{eq:ind_rewrite}
    \pr_{t_0 - \epsilon}(\tau = t_0 \vert W_{t_0 - \epsilon}) &= \frac{\pr\bigl(\tau = t_0,\, \tau > t_0 - \epsilon \big\vert W_{t_0 - \epsilon}\bigr)}{\pr(\tau > t_0 - \epsilon \vert W_{t_0 - \epsilon})} \notag \\
    &= \frac{\pr\bigl(\rho_{t_0 - \epsilon}(W_{t_0 - \epsilon}) = t_0,\, \tau > t_0 - \epsilon \big\vert W_{t_0 - \epsilon}\bigr)}{\pr(\tau > t_0 - \epsilon \vert W_{t_0 - \epsilon})} \notag \\
    &= \pr\bigl(\rho_{t_0 - \epsilon}(W_{t_0 - \epsilon}) = t_0 \big\vert W_{t_0 - \epsilon}\bigr).
\end{align}
This implies that $\pr_{t_0 - \epsilon}(\tau = t_0 \vert W_{t_0 - \epsilon} = x) = \pr\bigl(\rho_{t_0 - \epsilon}(W_{t_0 - \epsilon}) = t_0 \big\vert W_{t_0 - \epsilon} = x\bigr)$ for a.e.\@ $x \in C$. However, it clearly holds that
\begin{equation*}
    \pr\bigl(\rho_{t_0 - \epsilon}(W_{t_0 - \epsilon}) = t_0 \big\vert W_{t_0 - \epsilon} = x\bigr) = \pr\bigl(\rho_{t - \epsilon}(W_{t - \epsilon}) = t \big\vert W_{t - \epsilon} = x\bigr)
\end{equation*}
and, similarly to Equation \eqref{eq:ind_rewrite}, we have $\pr\bigl(\rho_{t - \epsilon}(W_{t - \epsilon}) = t \big\vert W_{t - \epsilon} = x\bigr) = \pr_{t - \epsilon}(\tau = t \vert W_{t - \epsilon} = x)$ for a.e.\@ $x \in C$. Thus, in summary, we have deduced that
\begin{equation*}
    \pr_{t_0 - \epsilon}(\tau = t_0 \vert W_{t_0 - \epsilon} = x) = \pr_{t - \epsilon}(\tau = t \vert W_{t - \epsilon} = x)
\end{equation*}
for a.e.\@ $x \in C$, which in turn implies that $\pr_{t - \epsilon}(\tau = t \vert W_{t - \epsilon} = x) \geq q/2$ for a.e.\@ $x \in K$. Then, we compute
\begin{align*}
    \pr(\tau = t) &= \pr(\tau > t - \epsilon) \pr_{t - \epsilon}(\tau = t) \\
    &\geq \pr(\tau > t - \epsilon) \int_K \pr_{t - \epsilon}(\tau = t \vert W_{t - \epsilon} = x) p_{t - \epsilon}(x) \, \d x \\
    &\geq \frac{q \pr(\tau > t - \epsilon)}{2} \int_K p_{t - \epsilon}(x) \, \d x.
\end{align*}
But by our earlier remark, $p_{t - \epsilon}(x) > 0$ for a.e.\@ $x \in C$, so the right-hand side is positive, as required. This concludes the proof.
\end{proof}

\begin{proposition} \label{prop:hitting_boundary}
Let $D \subset \R^d$ be an open set that satisfies the Poincar\'e--Zaremba cone condition and assume that $\xi$ takes values in $\partial D$. Then a.s.\@ for all $\epsilon > 0$ there exists $t \in (0, \epsilon]$ such that $X_t \notin \bar{D}$.
\end{proposition}

\begin{proof}
First, we reduce to the case that $\beta$ vanishes and $\sigma = I_d$. Using that $\beta$ is uniformly bounded and $\sigma$ is invertible, we can apply the change of measure 
\begin{equation*}
    \frac{\d \qr}{\d \pr}\bigg\vert_{\F_1} = \cal{E}\biggl(\int_0^{\cdot} - \sigma^{-1} \beta_t \, \d W_t\biggr)_1,
\end{equation*}
which is well-defined because the stochastic exponential is a martingale by Novikov's condition. Under $\qr$, the process $B = (B_t)_{t \geq 0}$ defined by $B_t = \int_0^{t \land 1} \beta_s \, \d s + W_t$ is a Brownian motion. Now, since we are interested in proving that a statement has probability one and $\qr \sim \pr$, we may show the result under $\qr$, where $X$ is driftless. Thus, we may as well assume that $\beta$ vanishes. Next, since $\sigma$ is invertible, we may consider the process $X' = (X'_t)_{t \geq 0}$ defined by $X'_t = \sigma^{-1}X_t = \sigma^{-1} \xi + W_t$. Let us set $D' = \sigma^{-1}D = \{\sigma^{-1} x \define x \in D\}$, so that $D'$ satisfies the Poincar\'e--Zaremba cone condition, $\partial D' = \{\sigma^{-1} x \define x \in \partial D\}$, and $\bar{D}' = \{\sigma^{-1} x \define x \in \bar{D}\}$. The process $X'$ is started from the $\partial D'$-valued random variable $\sigma^{-1} \xi$ and a.s.\@ for all $\epsilon \in (0, 1]$ there exists $t \in (0, \epsilon]$ such that $X_t \notin \bar{D}$ if and only if a.s.\@ for all $\epsilon \in (0, 1]$ there exists $t \in (0, \epsilon]$ such that $X'_t \notin \bar{D}'$. Consequently, it suffices to establish the proposition for $\sigma = I_d$. 

We want to further simplify to the case that $\xi = x_0$ for some $x_0 \in \partial D$. For any $A \in \F$, we can write $\pr(A) = \int_{\partial D} \pr(A \vert \xi = x) \, \d \L(\xi)(x)$. Hence, if we can prove the proposition under the law $\pr_x = \pr(\cdot \vert \xi = x)$ for $\L(\xi)$-a.e.\@ $x \in \partial D$, then the general case follows as well. However, for $\L(\xi)$-a.e.\@ $x \in \partial D$, $W$ is a Brownian motion under $\pr(\cdot \vert \xi = x)$ and $X$ satisfies $X_t = x + W_t$, meaning that the assumptions of the proposition hold with $\xi = x$. Consequently, we are done if we can prove the proposition for the case that $\xi = x$ for some $x \in \partial D$. We shall do this next.

Owing to the Poincar\'e--Zaremba cone condition, we can find an open cone $C$ with apex $x$ and a $\delta > 0$ such that $C \cap B_{\delta}(x) \subset \bar{D}^c$. We must show that a.s.\@ for all $\epsilon \in (0, 1]$ there exists $t \in (0, \epsilon]$ such that $X_t \notin \bar{D}$. Note that this event, call it $E$, belongs to $\F^W_{0+}$, where $(\F^W_t)_{t \geq 0}$ denotes the filtration generated by $W$ and $(\F^W_{t+})_{t \geq 0}$ is its right-continuous extension. Hence, Blumenthal's zero-one law for Brownian motion implies that $E$ occurs either with probability zero or one. Consequently, it is enough to show that $\pr(E) > 0$. However, by continuity from above of the measure $\pr$, we have
\begin{equation} \label{eq:cont_measure}
    \pr(E) = \lim_{\epsilon \searrow 0} \pr\bigl(\exists t \in (0, \epsilon] \define X_t \in \bar{D}^c\bigr) \geq \lim_{\epsilon \searrow 0} \pr\bigl(\exists t \in (0, \epsilon] \define X_t \in C \cap B_{\delta}(x)\bigr).
\end{equation}
Now, the scaling properties of Brownian motion imply that the process $W^{\epsilon} = (W^{\epsilon}_t)_{t \geq 0}$ defined by $W^{\epsilon}_t = \epsilon^{-1/2} W_{\epsilon t}$ is a Brownian motion. Consequently,
\begin{align*}
    \pr\bigl(\exists t \in (0, \epsilon] \define X_t \in C \cap B_{\delta}(x)\bigr) &= \pr\bigl(\exists t \in (0, 1] \define X_{\epsilon t} \in C \cap B_{\delta}(x)\bigr) \\
    &= \pr\bigl(\exists t \in (0, 1] \define x + \epsilon^{1/2} W^{\epsilon}_t \in C \cap B_{\delta}(x)\bigr) \\
    &= \pr\bigl(\exists t \in (0, 1] \define x +  W^{\epsilon}_t \in C \cap B_{\epsilon^{-1/2}\delta}(x)\bigr) \\
    &\geq \pr\bigl(\exists t \in (0, 1] \define x +  W_t \in C \cap B_{\delta}(x)\bigr),
\end{align*}
where we used in the third equality that $C$ is a cone with apex $x$. The probability on the right-hand side is clearly positive, so in view of \eqref{eq:cont_measure}, we have 
\begin{equation*}
    \pr(E) \geq \lim_{\epsilon \searrow 0} \pr\bigl(\exists t \in (0, \epsilon] \define X_t \in C \cap B_{\delta}(x)\bigr) \geq \pr\bigl(\exists t \in (0, 1] \define x +  W_t \in C \cap B_{\delta}(x)\bigr) > 0,
\end{equation*}
as required. This concludes the proof.
\end{proof}

% \section{Yamada--Watanabe Theorem for McKean--Vlasov SDE \texorpdfstring{\eqref{eq:fleming_viot}}{(3.1)}} \label{app:yw}
\section{Notions of Uniqueness for McKean--Vlasov SDE \texorpdfstring{\eqref{eq:fleming_viot}}{(3.1)}} \label{app:yw}
 
We first establish a Yamada--Watanabe type theorem that connects the notions of pathwise uniqueness and uniqueness in law for McKean--Vlasov SDE \eqref{eq:fleming_viot} introduced in Definition \ref{def:uniqueness}. 

\begin{theorem} \label{thm:yw}
Pathwise uniqueness for McKean--Vlasov SDE \eqref{eq:fleming_viot} implies uniqueness in law.
\end{theorem}

\begin{proof}
We proceed in two steps. First, we will show that changing the representation of $\P(\R^d)$ underlying a weak solution of McKean--Vlasov SDE \eqref{eq:fleming_viot} does not change its law. Then, for any two weak solutions $X^1$ and $X^2$ with possibly different representations of $\P(\R^d)$ defined on possibly different probability spaces, we will construct solutions $\tilde{X}^1$ and $\tilde{X}^2$ with the same underlying representation of $\P(\R^d)$ defined on the same stochastic setup (cf.\@ paragraph above Definition \ref{def:weak_solution}), such that $\tilde{X}^i \sim X^i$ for $i = 1$, $2$. From this and the assumed pathwise uniqueness for McKean--Vlasov SDE \eqref{eq:fleming_viot}, we deduce
\begin{equation} \label{eq:equal_bridge}
    X^1 \sim \tilde{X}^1 \sim \tilde{X}^2 \sim X^2,
\end{equation}
implying uniqueness in law.

\textit{Step 1}: Fix a weak solution $X$ of McKean--Vlasov SDE \eqref{eq:fleming_viot} with underlying representation $H$ of $\P(\R^d)$ and suppose we wish to change this representation to another representation $\tilde{H}$. We will inductively construct another solution $\tilde{X}$ on the same stochastic setup with underlying representation $\tilde{H}$ and such that $\tilde{X} \sim X$. We begin by setting $\tilde{\tau}_1 = \tau_1$ and defining $(\tilde{X}_t)_{t \in [0, \tilde{\tau}_1)}$ by $\tilde{X}_t = X_t$ for $t \in [0, \tilde{\tau}_1)$. Now, suppose that $\tilde{X}$ is constructed up to the $k$th reinsertion time $\tilde{\tau}_k$ and $(\tilde{X}_t)_{t \in [0, \tilde{\tau}_k)} \sim (X_t)_{t \in [0, \tau_k)}$ for some $k \geq 1$, where $\tau_k$ is the $k$th time that $X$ is reinserted. Then setting $\tilde{\xi}_k = \tilde{H}(\mu_{\tilde{\tau}_k}, U_k)$, where $\mu_t = \L(X_t)$, it holds that
\begin{align} \label{eq:insertion_equidistr}
    \bigl((\tilde{X}_t)_{t \in [0, \tilde{\tau}_k)}, \tilde{\xi}_k\bigr) &= \bigl((\tilde{X}_t)_{t \in [0, \tilde{\tau}_k)}, \tilde{H}(\mu_{\tilde{\tau}_k}, U_k)\bigr) \notag \\
    &\sim \bigl((X_t)_{t \in [0, \tau_k)}, \tilde{H}(\mu_{\tau_k}, U_k)\bigr) \notag \\
    &\sim \bigl((X_t)_{t \in [0, \tau_k)}, H(\mu_{\tau_k}, U_k)\bigr) \notag \\
    &= \bigl((X_t)_{t \in [0, \tau_k)}, \xi_k\bigr).
\end{align}
The third line follows from the fact that
\begin{equation*}
    \L\bigl(\tilde{H}(\mu_{\tau_k}, U_k)\big\vert (X_t)_{t \in [0, \tau_k)}\bigr) = \mu_{\tau_k} = \L\bigl(H(\mu_{\tau_k}, U_k)\big\vert (X_t)_{t \in [0, \tau_k)}\bigr)
\end{equation*}
since $U_k$ is independent of $(X_t)_{t \in [0, \tau_k)}$ and both $H$ and $\tilde{H}$ are representations of $\P(\R^d)$. Now, setting $\tilde{X}_{\tilde{\tau}_k} = \tilde{\xi}_k$, we have that $(\tilde{X}_t)_{t \in [0, \tilde{\tau}_k]}$ solves McKean--Vlasov SDE \eqref{eq:fleming_viot} up to and including time $\tilde{\tau}_k$ and \eqref{eq:insertion_equidistr} implies that $(\tilde{X}_t)_{t \in [0, \tilde{\tau}_k]} \sim (X_t)_{t \in [0, \tau_k]}$. Next, we extend $\tilde{X}$ by solving SDE \eqref{eq:fleming_viot} between $\tilde{\tau}_k$ and $\tilde{\tau}_{k + 1} = \inf\{t > \tilde{\tau}_k \define \tilde{X}_{t-} \notin D\}$ with $\mu_t = \L(X_t)$, $t \geq 0$, viewed as a fixed input. This SDE has a unique strong solution, so we get $(\tilde{X}_t)_{t \in [0, \tilde{\tau}_{k + 1})} \sim (X_t)_{t \in [0, \tau_{k + 1})}$, which concludes the induction step. Thus, we obtain the desired weak solution $\tilde{X}$ of McKean--Vlasov SDE \eqref{eq:fleming_viot} with underlying representation $\tilde{H}$ and such that $\tilde{X} \sim X$.

\textit{Step 2}: Fix two weak solutions $X^1$ and $X^2$ on possibly different stochastic setups $(\Omega^i, \F^i, \bb{F}^i, \pr^i, W^i, (U^i_k)_{k \geq 1})$, $i = 1$, $2$. Owing to Step 1, we may assume that $X^1$ and $X^2$ share the same representation of $\P(\R^d)$. Now, we proceed as in the proof of the classical Yamada--Watanabe theorem \cite[Proposition 1]{yamada_uniqueness_1971} to transfer $X^1$ and $X^2$ to a common stochastic setup. Let us define the regular conditional distributions
\begin{equation*}
    P^i_{x, w, u} = \L\bigl(X^i \big\vert X^i_0 = x, W^i = w, (U^i_k)_{k \geq 1} = u\bigr)
\end{equation*}
on $D_{\R^d}[0, \infty)$ for $x \in \R^d$, $w \in C([0, \infty); \R^d)$, and $u \in [0, 1]^{\bb{N}}$. Here $D_{\R^d}[0, \infty)$ is the space of c\`adl\`ag paths $[0, \infty) \to \R^d$ endowed with the $J1$-topology. Then, we define the probability space $(\Omega, \F, \pr)$ as follows: $\Omega = D_{\R^d}[0, \infty) \times D_{\R^d}[0, \infty) \times \R^d \times C([0, \infty); \R^d) \times [0, 1]^{\bb{N}}$, $\F$ is the Borel $\sigma$-algebra on $\Omega$, and $\pr$ is determined by
\begin{equation*}
    \pr\bigl(A_1 \times A_2 \times A_3 \times A_4 \times A_5\bigr) = \int_{A_3 \times A_4 \times A_5} P^1_{x, w, u}(A_1) P^2_{x, w, u}(A_2) \, \d Q(x, w, u)
\end{equation*}
for Borel measurable sets $A_1 \subset D_{\R^d}[0, \infty)$, $A_2 \subset D_{\R^d}[0, \infty)$, $A_3 \subset \R^d$, $A_4 \subset C([0, \infty); \R^d)$, and $A_5 \subset [0, 1]^{\bb{N}}$ and $Q = \L(X^1_0, W^1, (U^1_k)_{k \geq 1})$. We let $(\tilde{X}^1, \tilde{X}^2, \xi, W, (U_k)_{k \geq 1})$ be the canonical random element on $\Omega$ and define the filtration $\bb{F} = (\F_t)_{t \geq 0}$ by
\begin{equation*}
    \F_t = \sigma\bigl(\tilde{X}^1_s, \tilde{X}^2_s, \xi, W_s, U_k \define s \in [0, t], k \geq 1\bigr). 
\end{equation*}
Arguing as in the proof of \cite[Proposition 1]{yamada_uniqueness_1971}, we can show that
\begin{equation*}
    (X^i, X^i_0, W^i, (U^i_k)_{k \geq 1}) \sim (\tilde{X}^i, \xi, W, (U_k)_{k \geq 1})
\end{equation*}
for $i = 1$, $2$. In particular, $\tilde{X}^1$ and $\tilde{X}^2$ are weak solutions to McKean--Vlasov SDE \eqref{eq:fleming_viot} on the same stochastic setup $(\Omega, \F, \bb{F}, \pr, W, (U_k)_{k \geq 1})$, with the same initial condition $\xi$ and the same underlying representation of $\P(\R^d)$. By pathwise uniqueness for McKean--Vlasov SDE \eqref{eq:fleming_viot}, we have $\tilde{X}^1 \sim \tilde{X}^2$, which gives \eqref{eq:equal_bridge}.
\end{proof}

Next, we provide a counterexample that shows that uniqueness in law does not hold if we do not insist that the sequence $(\xi_k)_{k \geq 1}$ in Definition \ref{def:weak_solution} of weak solutions is constructed via a representation of $\P(\R^d)$ and an independent sequence of uniform random variables. To be clear, by such a weak solution without underlying representation of $\P(\R^d)$, we simply mean a stochastic setup $(\Omega, \F, \bb{F}, \pr, W, (U_k)_{k \geq 1})$ carrying an $\R^d$-valued c\`adl\`ag $\bb{F}$-adapted stochastic process $X$ such that SDE \eqref{eq:fleming_viot} holds and $\mu_t = \L(X_t)$ for all $t \geq 0$.

Note that while Example \ref{ex:counter} below shows that the law of two such weak solutions without underlying representation of $\P(\R^d)$ may be distinct, their marginal laws still coincide. Indeed, a careful inspection of the proof of Theorem \ref{thm:correspondence} reveals that only the properties of weak solutions in the above sense, without underlying representation of $\P(\R^d)$, are used to establish the correspondence between the solution's marginal law and the conditional law $\L(X'_t \vert \tau' > t)$ of the unique strong solution $X'$ of the conditional McKean--Vlasov SDE \ref{eq:conditional_mv_closed}, where $\tau' = \inf\{t > 0 \define X'_t \notin D\}$. Thus, uniqueness in marginal law also holds for weak solutions without underlying representation of $\P(\R^d)$.

\begin{example} \label{ex:counter}
Consider the domain $[-1, 1] \times [-2, 2]$ in $\R^2$, initial condition $\xi$ distributed according to $\frac{1}{2}(\delta_{-z} + \delta_{z})$ with $z = (0, 1)^{\top} \in D$, vanishing drift $b(t, x, m, a) = 0$, and unit diffusion coefficient $\sigma = I_2$, where $I_2$ denotes the identity matrix in two dimensions. Let us set $A_{\pm} = \{\xi = \pm z\}$ and then define $\tilde{W} = (\tilde{W}^1, \tilde{W}^2)^{\top}$ by $\tilde{W}^1 = W^1$ and $\tilde{W}^2 = - \mathbf{1}_{A_-} W^2 + \mathbf{1}_{A_+} W^2$, where $W = (W^1, W^2)^{\top}$. Note that $\tilde{W}$ is an $\bb{F}$-Brownian motion. Then we let $X$ be the strong solution to McKean--Vlasov SDE \eqref{eq:fleming_viot} from Theorem \ref{thm:correspondence} for some given representation $H$ of $\P(\R^d)$ on the stochastic setup $(\Omega, \F, \bb{F}, \pr, \tilde{W}, (U_k)_{k \geq 1})$ for an i.i.d\@ sequence $(U_k)_{k \geq 1}$ of random variables uniformly distributed on $[0, 1]$. We will construct an alternative solution $\tilde{X}$ with a different law. This construction will be similar to the one at the beginning of the proof of Theorem \ref{thm:correspondence}. However, we will suitably modify the definition of $\xi_1$. To that end, let $\tau \define D \times C([0, \infty); \R^2) \to [0, \infty)$ be defined by $\tau(x, w) = \inf\{t > 0 \define x + w_t \notin D\}$, so that $\tau_1 = \tau(\xi, \tilde{W})$.

Now, suppose $\tilde{X}$ is obtained by solving SDE \eqref{eq:fleming_viot}, using the same Brownian motion $\tilde{W}$, up to the first hitting time $\tilde{\tau}_1 = \inf\{t > 0 \define \tilde{X}_{t-} \notin D\} = \tau(\xi, \tilde{W})$, so that $X_{\cdot \land \tau_1} = \tilde{X}_{\cdot \land \tilde{\tau}_1}$. We claim that $\tilde{\tau}_1 = \tau(z, W)$, so that $\tilde{\tau}_1$ is independent of $\xi$. Indeed, on $A_+$, the equality $\tilde{\tau}_1 = \tau(z, W)$ follows from the fact that $\tilde{W} = W$. On $A_-$, we have
\begin{align*}
    \tilde{\tau}_1 &= \inf\{t > 0 \define -z + \tilde{W}_t \notin D\} \\
    &= \inf\bigl\{t > 0 \define (0 + W^1_t, -1 - W^2_t)^{\top} \notin D\bigr\} \\
    &= \inf\{t > 0 \define z + W_t \notin D\} \\
    &= \tau(z, W),
\end{align*}
where we used in the third equality that the domain is symmetric under reflections across the $x$-axis. We will now define $\tilde{\xi}_1 = (\tilde{\xi}_1^1, \tilde{\xi}_1^2)^{\top}$ by $\tilde{\xi}_1^1 = H^1(\mu_{\tilde{\tau}_1}, U_1)$ and
\begin{equation*}
    \tilde{\xi}_1^2 = \sgn(\xi^2) \lvert H^2(\mu_{\tilde{\tau}_1}, U_1) \rvert = \sgn(\xi^2) \lvert H^2(\mu_{\tau(z, W)}, U_1) \rvert,
\end{equation*}
where $\xi^i$ and $H^i$ denote the $i$th component of $\xi$ and $H$, respectively, and $\sgn(x) = \mathbf{1}_{x > 0} - \mathbf{1}_{x \leq 0}$ for $x \in \R$. We claim that $\L(\tilde{\xi}_1 \vert \tilde{\tau}_1 = t) = \mu_t$ for $t \geq 0$. To prove this, we establish the equivalent result $\L(\tilde{\xi}_1, \tilde{\tau}_1) = \L(\xi_1, \tau_1)$. For $F \define \R^2 \times [0, \infty) \to \R$ measurable and bounded, we have
\begin{equation} \label{eq:law_decomp}
    \ev F(\tilde{\xi}_1, \tilde{\tau}_1) = \ev\bigl[\mathbf{1}_{A_-}F(\tilde{\xi}_1, \tilde{\tau}_1)\bigr] + \ev\bigl[\mathbf{1}_{A_+}F(\tilde{\xi}_1, \tilde{\tau}_1)\bigr].
\end{equation}
Now, by the definition of $\tilde{\xi}_1$ and since $\xi$ is independent of $(W, U_1)$, we find
\begin{align*}
    \ev\bigl[\mathbf{1}_{A_{\pm}} F(\tilde{\xi}_1, \tilde{\tau}_1)\bigr] &= \ev\Bigl[\mathbf{1}_{A_{\pm}} F\Bigl(H^1(\mu_{\tau(z, W)}, U_1), \pm\bigl\lvert H^2(\mu_{\tau(z, W)}, U_1) \bigr\rvert, \tau(z, W)\Bigr)\Bigr] \\
    &= \frac{1}{2}\ev\Bigl[F\Bigl(H^1(\mu_{\tau(z, W)}, U_1), \pm\bigl\lvert H^2(\mu_{\tau(z, W)}, U_1) \bigr\rvert, \tau(z, W)\Bigr)\Bigr] \\
    &= \frac{1}{2} \ev\bigl[F\bigl(\xi_1^1, \pm \lvert \xi_1^2\rvert, \tau_1\bigr)\bigr].
\end{align*}
The last equality follows from the fact that $\tau(z, W) = \tilde{\tau}_1 = \tau_1$ and the construction of $\xi_1$ via the representation $H$. Inserting this back into \eqref{eq:law_decomp} yields
\begin{align*}
    \ev F(\tilde{\xi}_1, \tilde{\tau}_1) &= \frac{1}{2}\Bigl(\ev\bigl[F\bigl(\xi_1^1, - \lvert \xi_1^2\rvert, \tau_1\bigr)\bigr] + \ev\bigl[F\bigl(\xi_1^1, \lvert \xi_1^2\rvert, \tau_1\bigr)\bigr]\Bigr) \\
    &= \frac{1}{2}\Bigl(\ev\bigl[F\bigl(\xi_1^1, -\xi_1^2, \tau_1\bigr)\bigr] + \ev\bigl[F\bigl(\xi_1^1, \xi_1^2, \tau_1\bigr)\bigr]\Bigr).
\end{align*}
Finally, we claim that $\L((\xi_1^1, -\xi_1^2)^{\top}, \tau_1) = \L(\xi_1, \tau_1)$, which in view of the above gives the desired equality $\ev F(\tilde{\xi}_1, \tilde{\tau}_1) = \ev F(\xi_1, \tau_1)$. To prove the claim, we simply note that due to the reflection symmetry of $D$ across the $x$-axis, the process $(X^1, -X^2)^{\top}$, where $X^1$ and $X^2$ are the components of $X$, is a solution to McKean--Vlasov SDE \eqref{eq:fleming_viot} with initial condition $(\xi^1, -\xi^2)^{\top}$, Brownian motion $(\tilde{W}^1, -\tilde{W}^2)^{\top}$, and representation $(H^1, -H^2)$ of $\P(\R^d)$. Note further that the first hitting time of $(X^1, -X^2)^{\top}$ on the boundary of $D$ coincides with $\tau_1$. Consequently, we have $((X^1, -X^2)^{\top}, \tau_1) \sim (X, \tau_1)$ by uniqueness in law of McKean--Vlasov SDE \eqref{eq:fleming_viot}. This proves the claim and establishes the identity $\L(\tilde{\xi}_1 \vert \tilde{\tau}_1 = t) = \mu_t$. In particular, by completing the construction of $\tilde{X}$, continuing as in the proof of Theorem \ref{thm:correspondence}, we obtain a solution of McKean--Vlasov SDE \eqref{eq:fleming_viot}. However, while $\xi_1$ is independent of $\xi$ due to the independence of $\tau_1 = \tau(z, W)$ from $\xi$, the random vectors $\tilde{\xi}_1^2$ and $\xi^2$ are clearly correlated since their signs coincide by definition. Hence, the law of $X$ and $\tilde{X}$ are distinct.
\end{example}

\section*{Acknowledgement}
The author is thankful to Andreas S\o{}jmark for insightful discussions and valuable suggestions and to Luciano Campi for useful comments on the manuscript. The author thanks the anonymous referee for great suggestions for improvement.

\sloppypar
\printbibliography
% \bibliographystyle{amsplain}
% \bibliography{biblio_ejms}

\end{document}